\def\Xint#1{\mathchoice
  {\XXint\displaystyle\textstyle{#1}}%
  {\XXint\textstyle\scriptstyle{#1}}%
  {\XXint\scriptstyle\scriptscriptstyle{#1}}%
  {\XXint\scriptscriptstyle\scriptscriptstyle{#1}}%
  \!\int}
\def\XXint#1#2#3{{\setbox0=\hbox{$#1{#2#3}{\int}$}
    \vcenter{\hbox{$#2#3$}}\kern-.5\wd0}}
\def\dashint{\Xint-}
\newtheorem{Thm}{Theorem}[section]
\newtheorem{Prop}[Thm]{Proposition}
\newtheorem{Cor}[Thm]{Corollary}
\newtheorem{Lem}[Thm]{Lemma}
\newtheorem{Conj}[Thm]{Conjecture}
\newtheorem{Rem}[Thm]{Remark}
\numberwithin{equation}{section}
\begin{document}
\title
{The $p$-Royden and $p$-harmonic boundaries for Metric Measure Spaces}

\author[M. Lucia]{Marcello Lucia}
\address{Department of Mathematics \\College of Staten Island-CUNY \\ 2800 Victory Boulevard \\ Staten Island \\NY 10314 
\\USA}
\email{Marcello.Lucia@csi.cuny.edu}

\author[M.J. Puls]{Michael J. Puls}
\address{Department of Mathematics \\John Jay College-CUNY \\ 524 West 59th Street \\ New York \\NY 10019 
\\USA}
\email{mpuls@jjay.cuny.edu}
\thanks{The research of the first author was partially supported by Simons Foundation grant 210368.\\
The research of the second author was partially supported by PSC-CUNY grant 66269-00 44}

\begin{abstract}
Let $p$ be a real number greater than one and let $X$ be a locally compact, noncompact metric measure space that satisfies certain conditions. The $p$-Royden and $p$-harmonic boundaries of $X$ are constructed by using the $p$-Royden algebra of functions on $X$ and a Dirichlet type problem is solved for the $p$-Royden boundary. We also characterize the metric measure spaces whose $p$-harmonic boundary is empty.
\end{abstract}

\keywords{Dirichlet problem at infinity, metric measure space, $p$-harmonic function, $p$-parabolic, $p$-Royden algebra, $p$-weak upper gradient, $(p,p)$-Sobolev inequality}
\subjclass[2010]{Primary: 31B20; Secondary: 31C25, 54E45}

\date{May 19, 2015}
\maketitle

\section{Introduction}\label{introduction}
Throughout this paper $p$ will always denote a real number greater than one. Let $\Omega$ be a domain in the complex plane $\mathbb{C}$. The {\em extended boundary} of $\Omega$ is the usual boundary if $\Omega$ is bounded and is the usual boundary along with the point at infinity if it is unbounded. The domain $\Omega$ is known as a Dirichlet domain if the {\em Dirichlet problem} is solvable on $\Omega$. That is, if $f$ is a continuous real-valued function on the extended boundary of $\Omega$, then there exists a function $h$ that is harmonic in the interior of $\Omega$ and is equal to $f$ on the extended boundary of $\Omega$. It is well known that the unit disk in $\mathbb{C}$ is a Dirichlet domain. In fact, any simply connected domain in $\mathbb{C}$ is a Dirichlet domain.

More recently, Dirichlet type problems have been investigated in the more general setting of a metric measure space $X$. A good introduction to this topic is \cite[Chapter 10]{2Bjornbook}. With some additional assumptions on $X$, the following theorem is proved in \cite[Theorem 10.24]{2Bjornbook}
\begin{Thm}\label{Dirichletboundedmms}
Let $\Omega$ be a bounded domain in $X$ and assume that the Sobolev capacity of $X\setminus \Omega$ is greater than zero. If $f$ is a continuous function on $\partial \Omega$, then there exists a unique bounded $p$-harmonic function $h$ in $\Omega$ such that
\[ \lim_{\Omega \ni y \rightarrow x} h(y) = f(x) \text{ for quasieverywhere } x \in \partial \Omega. \]
\end{Thm}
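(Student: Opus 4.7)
The plan is to combine the direct method of the calculus of variations with a Kellogg-type boundary regularity theorem for $p$-harmonic functions.

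First, I would extend $f$ from $\partial\Omega$ to an admissible comparison function. Since $\partial\Omega$ is compact and $f$ is continuous there, a McShane extension produces a bounded Lipschitz function on $X$; multiplying by a Lipschitz cutoff that equals $1$ on a neighborhood of $\overline{\Omega}$ and vanishes outside a larger bounded set yields a compactly supported $F \in N^{1,p}(X) \cap C(X)$ with $F|_{\partial\Omega} = f$ and $\sup_X |F| \le \sup_{\partial\Omega} |f|$. I then introduce the Dirichlet class
\[
\mathcal{K}_F(\Omega) = \{ u \in N^{1,p}(\Omega) : u - F \in N^{1,p}_0(\Omega) \},
\]
which is nonempty, convex, and weakly closed in $N^{1,p}(\Omega)$, and consider the $p$-energy $E(u) = \int_\Omega g_u^{\,p}\,d\mu$ on it.

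The positivity of the Sobolev capacity of $X \setminus \Omega$, combined with boundedness of $\Omega$, supplies a Poincar\'e-type inequality on $N^{1,p}_0(\Omega)$, so $E$ is coercive on $\mathcal{K}_F(\Omega)$. Weak lower semicontinuity of $E$, coming from convexity of $t \mapsto |t|^p$ and the variational definition of the minimal $p$-weak upper gradient, together with coercivity produces a minimizer $h$; strict convexity forces uniqueness. Choosing variations $h + t\varphi$ with $\varphi \in N^{1,p}_0(\Omega)$ and differentiating at $t=0$ shows $h$ is $p$-harmonic in $\Omega$. Boundedness of $h$ follows by truncating at $\pm \sup_{\partial\Omega} |f|$: the truncation remains in $\mathcal{K}_F(\Omega)$ and cannot increase the energy, so it must coincide with $h$.

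The pointwise boundary limit is the delicate point. The membership $h - F \in N^{1,p}_0(\Omega)$ already encodes attainment of $f$ in the Sobolev/trace sense. To upgrade this to the pointwise statement at quasi-every $x \in \partial\Omega$, I would invoke a Kellogg-type theorem: the set of boundary points irregular for the $p$-Dirichlet problem has Sobolev capacity zero, and at every regular point the continuity of $F$ together with a barrier/Wiener-criterion argument forces $\lim_{\Omega \ni y \to x} h(y) = F(x) = f(x)$. I expect this last step to be the main obstacle. Existence, uniqueness, and $p$-harmonicity of the minimizer are standard variational facts in reflexive Banach spaces, whereas the Kellogg--Wiener theory for $p$-harmonic functions on metric measure spaces is considerably deeper: it uses the doubling and Poincar\'e assumptions in essential ways, requires fine capacity estimates near $\partial\Omega$, and rests on explicit barrier constructions near regular points.
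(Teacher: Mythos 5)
This theorem is not proved in the paper at all: it is quoted verbatim as \cite[Theorem 10.24]{2Bjornbook}, so there is no internal proof to compare against. Your outline is essentially the standard route taken in that reference (variational existence plus the Kellogg property), and you correctly identify boundary regularity as the deep ingredient. However, your very first step contains a genuine gap. A McShane extension exists only for \emph{Lipschitz} data; the theorem assumes $f$ merely continuous on $\partial\Omega$, and a continuous function on a compact set need not be Lipschitz, nor need it admit any extension lying in $N^{1,p}(X)$. So the Dirichlet class $\mathcal{K}_F(\Omega)$ cannot be formed directly from $f$, and the direct method does not get off the ground for general continuous boundary data. The standard repair is to approximate $f$ uniformly on $\partial\Omega$ by Lipschitz functions $f_j$, solve the variational problem for each $f_j$ (where your argument does apply), use the comparison principle to get $\sup_\Omega |h_j - h_k| \le \sup_{\partial\Omega}|f_j - f_k|$, and pass to the uniform limit; the Kellogg property then transfers the boundary limits to the limit function at quasi-every boundary point. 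This limiting step is not cosmetic --- it is where the continuity-versus-Sobolev mismatch is resolved.

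A second, subtler gap is uniqueness. Strict convexity of the $p$-energy gives uniqueness of the minimizer in a fixed Dirichlet class $\mathcal{K}_F(\Omega)$, but the theorem asserts uniqueness among \emph{all} bounded $p$-harmonic functions in $\Omega$ whose radial limits agree with $f$ quasieverywhere on $\partial\Omega$. Such a function need not a priori be an energy minimizer with Newtonian trace $F$, so energy uniqueness does not immediately apply. One needs a comparison/maximum principle for bounded $p$-harmonic functions whose boundary limits agree outside a set of capacity zero, and this is exactly where the hypothesis $C_p(X\setminus\Omega)>0$ and the Kellogg property enter again. Your sketch is a correct map of the terrain, but both the approximation step for continuous data and the strengthened uniqueness statement need to be supplied before it is a proof.
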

A natural question to ask is what can we say about the Dirichlet problem if $\Omega$ is unbounded? Shanmugalingam showed in \cite[Proposition 5.3]{Shanmug03} that with some assumptions on the metric space $X$, and if the measure on $X\setminus \Omega$ is positive, then for every function $f \in N^{1,p}(X)$ (Newtonian space) there is a solution to the $p$-Dirichlet problem on $\Omega$ with boundary data $f$. In \cite{Hansevi} Hansevi proved a similar result for the more general obstacle problem.  However, the papers \cite{Hansevi, Shanmug03} do not take into account the behavior of $f$ at infinity, or apply in the case $\Omega = X$. The papers \cite{Hansevi, Shanmug03} also illustrate that the main issue we come up against is that there is no natural boundary for $X$ that corresponds to the extended boundary of $\mathbb{C}$. What we need is a compactification for $X$ that will allow us to define a suitable boundary for $X$. The authors of \cite{HoloLangVaha07} use the Gromov boundary to study the Dirichlet problem at infinity for Gromov hyperbolic metric measure spaces. Another possible compactification for $X$ is the $p$-Royden compactification. This compactification allows us to define the $p$-Royden boundary of $X$. The $p$-harmonic boundary of $X$ is a special subset of the $p$-Royden boundary. The purpose of this paper is to construct both the $p$-harmonic boundary and the $p$-Royden boundary for a metric measure space. We prove a Dirichlet type problem at infinity for $X$ using the $p$-Royden boundary, and we characterize the $p$-parabolicity of $X$ in terms of the $p$-harmonic boundary.

In \cite{Royden62} Royden introduced the harmonic boundary of a Riemann surface $R$, also see Chapter III of \cite{SarioNakai70}. It was also shown in \cite{Royden62} that a bounded harmonic function with finite Dirichlet integral on a open Riemann surface $R$ can be determined by its behavior on the harmonic boundary of $R$. The results for Riemann surfaces were extended to noncompact orientable Riemannian manifolds in \cite{GlasnerKatz70}. In \cite{Lee00} the concept of the harmonic boundary was generalized to the $p$-harmonic boundary. The harmonic boundary corresponds to the case $p=2$. In \cite{Lee00} Lee proved the following Dirichlet type result: Suppose $M$ is a complete Riemannian manifold of bounded geometry and that the $p$-harmonic boundary of $M$-consists of $n$ points, where $n$ is finite. Then every bounded $p$-harmonic function on $M$ with finite $p$-Dirichlet integral is determined uniquely by its value on the $p$-harmonic boundary. Consequently, there is a nonconstant bounded $p$-harmonic function with finite $p$-Dirichlet integral on $M$ if and only if the $p$-harmonic boundary of $M$ consists of more than one point. 
In \cite{Puls10} many of the results in \cite{Lee00, Lee05} were shown to be true in the setting of graphs with bounded degree.

In 1975 Yau \cite{Yau75} proved that on a complete Riemannian manifold of non-negative Ricci curvature, every positive harmonic function is constant. Since then a number of papers have appeared studying various Liouville type problems, not only in the linear setting of harmonic functions, but also in the nonlinear setting of $p$-harmonic functions. See the introduction of \cite{HoloLangVaha07} for an excellent review of work done related to these issues. One reason for studying Dirichlet type problems at infinity is to determine when a subclass of $p$-harmonic functions on a space are constant or not. This of course, tells us if the space has a Liouville type property or not.

In this paper $X$ will always be a locally compact, noncompact, complete metric measure space with metric $d$ and positive complete Borel measure $\mu$. Furthermore, we will also assume that $\mu$ is doubling and that if $B$ is a nonempty open ball in $X$, then $0 < \mu(B) < \infty$. It will be assumed throughout that $X$ contains at least two points. Note that $X$ is also proper since we are assuming that it is complete and $\mu$ is doubling. Recall that a metric space is proper if all its closed and bounded subsets are compact. The main result of this study is the following theorem. All unexplained notation will be defined in later parts of the paper. 

\begin{Thm} \label{mainresult} 
Let $X$ be a metric measure space. Suppose $X$ satisfies a $(1,p)$-Poincar\'e inequality and a $(p, p)$-Sobolev inequality. Furthermore, assume that the volume of all balls of a fixed positive radius is bounded below by a positive constant. Let $f$ be a continuous real-valued function on the $p$-Royden boundary of $X$. Then there exists a real-valued $p$-harmonic function $h$ on $X$, such that $\lim_{n \rightarrow \infty} h(x_n) = f(x)$ whenever $x$ is an element of the $p$-Royden boundary and $(x_n)$ is a sequence in $X$ converging to $x$.
\end{Thm}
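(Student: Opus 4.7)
My plan is to combine a Stone-Weierstrass style approximation of the boundary data by elements of the $p$-Royden algebra $\mathcal{R}^p$ with a direct-method variational solution of the Dirichlet problem in the Newtonian space, tied together by a comparison principle to pass to the limit.

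First, I would reduce to boundary data already coming from $\mathcal{R}^p$. By construction the $p$-Royden compactification $X^*$ is a compact Hausdorff space in which each bounded element of $\mathcal{R}^p$ extends continuously, and this extension embeds $\mathcal{R}^p$ as a point-separating unital subalgebra of $C(X^*)$; hence Stone-Weierstrass makes it uniformly dense in $C(X^*)$. Using Tietze I extend $f$ to a continuous $\tilde f \in C(X^*)$ and choose $F_n \in \mathcal{R}^p$ with $\|F_n - \tilde f\|_{L^\infty(X^*)} \to 0$.

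Next, for each $n$ I would solve the Dirichlet problem with data $F_n$. Let $\mathcal{R}^p_0$ denote the subspace of $\mathcal{R}^p$ whose continuous extension to $X^*$ vanishes on the $p$-Royden boundary $\partial^p X$, and let $\mathcal{A}_n$ be the Newtonian-norm closure of $F_n + \mathcal{R}^p_0$. Minimize the $p$-energy
\[
E(u) = \int_X g_u^p \, d\mu,
\]
where $g_u$ is the minimal $p$-weak upper gradient, over $u \in \mathcal{A}_n$. The $(1,p)$-Poincar\'e inequality combined with the $(p,p)$-Sobolev inequality and the uniform lower bound on ball volumes control the Newtonian norm of a minimizing sequence in terms of $E(u)+\|F_n\|_\infty$; reflexivity and lower semicontinuity then produce a minimizer $h_n$, and the Euler-Lagrange equation makes $h_n$ $p$-harmonic on $X$. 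Truncating at the levels $\sup F_n$ and $\inf F_n$ cannot increase the energy, so the resulting comparison principle gives $\inf F_n \le h_n \le \sup F_n$; applied to the pair $(h_n,h_m)$ with boundary data $(F_n,F_m)$ it yields $\|h_n - h_m\|_\infty \le \|F_n - F_m\|_{L^\infty(\partial^p X)}$. Thus $h_n \to h$ uniformly on $X$, with $h$ bounded and $p$-harmonic by the stability of energy minimizers under uniform convergence with bounded $p$-energy.

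The main obstacle is then pointwise continuity of $h$ up to the full $p$-Royden boundary. The argument has to show that each $h_n$ extends continuously to $X^*$ with boundary values $F_n|_{\partial^p X}$; equivalently, that the Newtonian-norm closure of $\mathcal{R}^p_0$ still consists of Royden algebra elements whose continuous extensions vanish on $\partial^p X$. This is the delicate step, since a priori Newtonian approximation controls only a trace on the smaller $p$-harmonic boundary, whereas here we need continuity on all of $\partial^p X$; this is exactly where the combination of the Sobolev, Poincar\'e, and volume-lower-bound hypotheses must be used to upgrade the Newtonian trace to continuous boundary values in the Royden compactification. Granted this, for any $x \in \partial^p X$ and any $x_m \to x$ the triangle inequality
\[
|h(x_m) - f(x)| \le \|h - h_N\|_\infty + |h_N(x_m) - F_N(x_m)| + |F_N(x_m) - f(x)|
\]
reduces the problem to choosing $N$ large to control the outer terms uniformly in $m$, and then sending $m \to \infty$ using continuity of $F_N$ on $X^*$ and the vanishing of $h_N - F_N$ on $\partial^p X$.
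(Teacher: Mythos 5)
Your outer frame (Tietze extension, Stone--Weierstrass density of the Royden algebra in $C(\overline{X})$, a sup-norm comparison estimate making the approximating harmonic functions uniformly Cauchy, and the closing triangle inequality) matches the paper's proof. The gap is in the middle: you write ``Granted this'' at exactly the step that carries all the analytic content of the theorem, namely that the $p$-harmonic function with Royden-algebra data $F_n$ attains the value $F_n(x)$ at \emph{every} point $x$ of the $p$-Royden boundary. This is not a routine trace upgrade; it is Proposition \ref{bpboundary} of the paper, and it is where both extra hypotheses are consumed. The paper's mechanism is concrete: solving Dirichlet problems on an exhaustion $(\Omega_k)$ produces functions $F_n - h_{k,n}$ with compact support, so the $(p,p)$-Sobolev inequality bounds $\Vert F_n - h_{k,n}\Vert_p$ uniformly in $k$; weak compactness and Mazur's lemma then give $g := F_n - h_n \in L^p(X)$. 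Since $g$ is continuous on $\overline{X}$ and lies in $L^p(X)$, and since balls of a fixed radius have measure bounded below, the supremum of $\vert g\vert$ on a ball escaping to infinity is controlled by the $L^p$ integral of $g$ over that ball, which tends to $0$ because $g$ is approximable in $L^p$ by compactly supported functions; hence $g$ vanishes on all of $R_p(X)$. Without reproducing some version of this, your argument does not establish the boundary behavior, which is the assertion being proved.

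There is also a technical defect in your variational setup. You minimize over the Newtonian-norm closure of $F_n + \mathcal{R}^p_0$, but elements of the Royden algebra are merely bounded with $p$-integrable minimal $p$-weak upper gradient; when $\mu(X)=\infty$ they need not lie in $L^p(X)$, so $F_n$ itself typically has infinite Newtonian norm and your admissible class is ill-defined or empty. One must either work with the Dirichlet seminorm alone (and then justify coercivity, reflexive weak compactness, and existence of a minimizer by other means), or, as the paper does, avoid a global minimization entirely by solving on relatively compact domains, where \cite[Theorem 10.24]{2Bjornbook} applies, and passing to the limit with Arzel\`a--Ascoli and local H\"older estimates. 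Note finally that your class $\mathcal{R}^p_0$ (Royden functions vanishing on $R_p(X)$) is a priori larger than the $BD^p$-closure of the compactly supported Royden functions, and the inclusion you would need to run your trace argument is precisely what is not known in general.
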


In Section \ref{preliminaries} we define many of the terms that will be used throughout the paper. We also define an algebra of functions on $X$, the $p$-Royden algebra, which is critical for defining the $p$-Royden boundary. We end Section \ref{preliminaries} by giving the definition for a function to be $p$-harmonic on $X$. Section \ref{defnpharmboundary} is devoted to the construction of the $p$-Royden and $p$-harmonic boundaries of $X$. We also give a characterization of when the $p$-harmonic boundary is empty. In Section \ref{proofmainresult} we prove Theorem \ref{mainresult}. We speculate in Section \ref{someremarks} about what would happen if we dropped the $(p,p)$-Sobolev inequality condition from Theorem \ref{mainresult}.

We would like to thank the referee for making some useful suggestions that led to significant improvements to the paper.

\section{Preliminaries}\label{preliminaries}
In this section we will define some terms and notation that will be used throughout the paper. We also define the $p$-Royden algebra for $X$. This algebra is crucial for defining the $p$-Royden boundary of $X$. We end this section by giving the definition of a $p$-harmonic function on $X$.

Denote respectively by $\mathbb{R}$ and $\mathbb{N}$ the real numbers and the natural numbers. If $x \in X$ and $r$ is a positive real number, then $B_r(x)$ will denote the metric ball of radius $r$ centered at $x$. We will write a.e. to indicate that a given property holds almost everywhere with respect to the measure $\mu$. A connected open set is a domain. If $\Omega$ is a domain in $X$, then $\overline{\Omega}$ will denote the closure of $\Omega$ in $X$. A measurable real-valued function $f$ on $X$ is $p$-integrable if $\int_X \vert f \vert^p\, d\mu < \infty$. Let $\mathcal{L}^p(X)$ be the set of extended real-valued measurable functions on $X$ that are $p$-integrable. Observe that in $\mathcal{L}^p(X)$ we do not identify those functions that differ only on a set of measure zero. We will write $L^p(X)$ to indicate the set obtained from $\mathcal{L}^p(X)$ by identifying functions that agree a.e.. The usual Banach space norm on $L^p(X)$ will be denoted by $\Vert \cdot \Vert_p$. The notation $\Omega \Subset X$ will mean that $\Omega$ is a relatively compact subset of $X$. The space $\mathcal{L}^p_{loc} (X)$ will consist of those functions $f$ that satisfy $f \in \mathcal{L}^p(\Omega)$ for all $\Omega \Subset X$. For a extended real-valued function $f$ on $X$, let $\Vert f \Vert_{\infty} = \sup\{ \vert f(x) \vert \mid x \in X\}$ and let $\mbox{supp}\,f$ denote the support of $f$ on $X$.

The main ingredient used in the construction of the $p$-Royden boundary of $X$ is the $p$-Royden algebra of functions on $X$.  For Riemannian manifolds and graphs the $p$-Royden algebra consists of all bounded continuous functions $f$ such that $\vert \nabla f \vert$, where $\nabla f$ denotes the gradient of $f$, is $p$-integrable. This poses a problem for us because there is no differentiable structure on a general metric measure space. It turns out that the norm of $\nabla f$ is what is essential. Over the past couple of decades the theory of the $p$-weak upper gradient of a function $f$ on a metric measure space has been developed as a generalization of $\vert \nabla f \vert$. In particular, $p$-weak upper gradients have been very useful in the development of Newtonian spaces, which are Sobolev type spaces on metric measure spaces. See \cite[Chapter 1]{2Bjornbook} and the references therein for information concerning Newtonian spaces.

Let $\mathcal{P}$ be the family of all locally rectifiable curves in $X$. For $\Gamma \subset \mathcal{P}$ let $Q(\Gamma)$ be the set of all Borel functions $\rho \colon X \rightarrow [0, \infty]$ that satisfy 
\[ \int_{\gamma} \rho ds \geq 1 \mbox{ for every } \gamma \in \Gamma. \]
The {\em $p$-modulus} of $\Gamma$ is defined to be 
\[ \mbox{Mod}_p (\Gamma) = \inf_{\rho \in Q(\Gamma)} \int_X \rho^p d\mu. \]
We shall say that a property of curves holds for {\em $p$-almost every curve} if the family of curves for which the property fails has zero $p$-modulus.

Suppose $u \colon X \rightarrow \mathbb{R}$ is a Borel function. A Borel function $g \colon X \rightarrow [0, \infty]$ is defined to be an {\em upper gradient} of $u$ if 
\[ \vert u(\gamma(b)) - u(\gamma(a)) \vert \leq \int_{\gamma} g ds \]
for every rectifiable curve $\gamma \colon [a, b] \rightarrow X$. Note that $ g = \infty$ is an upper gradient for $u$. We shall say that $g$ is a {\em $p$-weak upper gradient} of $u$ if the above inequality holds on $p$-almost every curve $\gamma$ in $\mathcal{P}$. It is worth mentioning that if $g$ is a $p$-weak upper gradient for $u$ and $v$ is a function on $X$ that satisfies $u = v$ a.e., then it is not necessarily true that $g$ is a $p$-weak upper gradient of $v$. However, what is true is that if $g'$ is a nonnegative function on $X$ for which $g' = g$ a.e., then $g'$ is also a $p$-weak upper gradient for $u$, \cite[Corollary 1.44]{2Bjornbook}. The following proposition is a direct consequence of the fact $\text{Mod}_p(\bigcup_{j=1}^{\infty} \Gamma_j) \leq \sum_{j=1}^{\infty} \text{Mod}_p(\Gamma_j)$, which is \cite[Lemma 1.34(b)]{2Bjornbook}.

\begin{Prop}\label{linear}
Let $u, v$ be extended real-valued functions on $X$ and let $a$ and $b$ be real numbers. Suppose that $g$ and $h$ are $p$-weak upper gradients for $u$ and $v$ respectively. Then $\vert a \vert g + \vert b \vert h$ is a $p$-weak upper gradient for $au + bv$.
\end{Prop}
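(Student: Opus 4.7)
The plan is to argue directly from the definition of a $p$-weak upper gradient, using the subadditivity of $p$-modulus to combine the two exceptional families.

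First I would let $\Gamma_u \subset \mathcal{P}$ denote the family of rectifiable curves on which the upper gradient inequality $|u(\gamma(b))-u(\gamma(a))| \le \int_\gamma g\,ds$ fails, and similarly $\Gamma_v$ the family on which the corresponding inequality for $v$ and $h$ fails. By hypothesis, $\mathrm{Mod}_p(\Gamma_u) = \mathrm{Mod}_p(\Gamma_v) = 0$. The cited countable subadditivity \cite[Lemma 1.34(b)]{2Bjornbook} then gives $\mathrm{Mod}_p(\Gamma_u \cup \Gamma_v) \le \mathrm{Mod}_p(\Gamma_u) + \mathrm{Mod}_p(\Gamma_v) = 0$, so the combined family of ``bad'' curves is still $p$-negligible.

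Next, for every rectifiable curve $\gamma \colon [a,b] \to X$ lying outside $\Gamma_u \cup \Gamma_v$, I would apply the triangle inequality in $\mathbb{R}$ together with the two upper-gradient inequalities:
\[
\bigl| (au+bv)(\gamma(b)) - (au+bv)(\gamma(a)) \bigr|
\le |a|\,|u(\gamma(b))-u(\gamma(a))| + |b|\,|v(\gamma(b))-v(\gamma(a))|
\le |a|\int_\gamma g\,ds + |b|\int_\gamma h\,ds
= \int_\gamma (|a|g+|b|h)\,ds,
\]
where the last equality uses linearity of the line integral and the nonnegativity of $g,h$. This shows that $|a|g + |b|h$ satisfies the upper-gradient inequality for $au+bv$ on every curve in $\mathcal{P} \setminus (\Gamma_u \cup \Gamma_v)$, which is $p$-almost every curve. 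Since $|a|g + |b|h$ is clearly a nonnegative Borel function, it is a $p$-weak upper gradient of $au+bv$.

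There is essentially no obstacle here: the proof is a bookkeeping exercise once one knows that the union of two $p$-exceptional families is $p$-exceptional, which is exactly the subadditivity statement the authors point to. The only subtlety worth being a little careful about is the case where $u$ or $v$ takes the value $\pm\infty$ at an endpoint of $\gamma$; but outside the exceptional families the relevant differences are bounded by finite integrals, which keeps things well-defined on the curves that matter.
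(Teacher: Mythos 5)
Your proof is correct and follows exactly the route the paper indicates: combine the two exceptional curve families via the subadditivity of the $p$-modulus (\cite[Lemma 1.34(b)]{2Bjornbook}) and then apply the triangle inequality on every remaining curve, which is precisely the argument the authors later write out explicitly for products in $BD^p(X)$. The only cosmetic issue is the clash between the scalars $a,b$ and the endpoints of $\gamma\colon[a,b]\to X$, which you inherit from the statement itself.
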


The following Proposition will be needed in the sequel. It's slightly more general then \cite[Proposition 2.3]{2Bjornbook}, but the proof is exactly the same. 
\begin{Prop}\label{completeness}
Assume that $(u_n)$ is a sequence of real-valued Borel functions on $X$ and that $g_n \in L^p(X)$ is a $p$-weak upper gradient of $u_n$ for each $n \in \mathbb{N}$. Also assume that $u_n \rightarrow u$ a.e., $g_n \rightarrow g$ in $L^p(X)$ and that $g$ is nonnegative. Then there is a function $\tilde{u} = u$ a.e. such that $g$ is a $p$-weak upper gradient of $\tilde{u}$.

Moreover, if there is a subsequence $(u_{n,k})$ of $(u_n)$ such that $(u_{n,k}) \rightarrow u$ q.e., then we may choose $\tilde{u} = u.$
\end{Prop}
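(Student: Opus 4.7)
The plan is to mirror the standard Fuglede-lemma proof of the cited \cite[Proposition~2.3]{2Bjornbook}, taking extra care at the endpoints of curves where $u_n$ is only known to converge to $u$ almost everywhere.

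First, I would pass to a subsequence (still denoted $g_n$) with $\|g_n-g\|_p$ summable, so that Fuglede's lemma provides a curve family $\Gamma_F\subset\mathcal{P}$ of zero $p$-modulus such that $\int_\gamma g_n\,ds\to\int_\gamma g\,ds$ for every $\gamma\notin\Gamma_F$. Writing $\Gamma_n$ for the $p$-exceptional family on which the upper gradient inequality for $(u_n,g_n)$ fails, and letting $N:=\{x\in X:u_n(x)\not\to u(x)\}$ (which has $\mu(N)=0$), I would append to $\Gamma_F\cup\bigcup_n\Gamma_n$ the family of rectifiable curves whose trace meets $N$ in a set of positive length. By countable subadditivity of $\mathrm{Mod}_p$ (\cite[Lemma~1.34(b)]{2Bjornbook}), the resulting union $\Gamma_{\mathrm{bad}}$ is still $p$-exceptional.

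Next, I fix $\gamma\colon[a,b]\to X$ outside $\Gamma_{\mathrm{bad}}$. Since $\mathcal{H}^1(\gamma^{-1}(N))=0$ I can choose parameters $a_k\downarrow a$ and $b_k\uparrow b$ with $\gamma(a_k),\gamma(b_k)\notin N$, so that $u_n(\gamma(a_k))\to u(\gamma(a_k))$ and $u_n(\gamma(b_k))\to u(\gamma(b_k))$. Applying the upper gradient inequality to the subcurves and then Fuglede to let $n\to\infty$ yields
\[
\limsup_{n\to\infty} \bigl|u_n(\gamma(b)) - u_n(\gamma(b_k))\bigr| \;\le\; \int_{\gamma|_{[b_k,b]}} g\,ds,
\]
and the right-hand side vanishes as $k\to\infty$. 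This forces $(u_n(\gamma(b)))_n$ to be Cauchy, and symmetrically at $\gamma(a)$. Defining $\tilde u(x):=\lim_n u_n(x)$ wherever this limit exists in $\mathbb{R}$ (and $\tilde u(x):=u(x)$ otherwise) gives a function with $\tilde u=u$ a.e.; passing to the limit in $|u_n(\gamma(b))-u_n(\gamma(a))|\le\int_\gamma g_n\,ds$ then shows $g$ is a $p$-weak upper gradient of $\tilde u$.

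For the moreover clause, q.e.\ convergence of $(u_{n,k})$ means convergence off a set $E$ of Sobolev $p$-capacity zero, and the family of curves meeting such an $E$ at \emph{any} point (endpoints included) is $p$-exceptional. The detour through interior good parameters becomes unnecessary, so one may take $\tilde u=u$ directly. The main obstacle is precisely this endpoint issue: a.e.\ convergence cannot control $u_n$ at the exact endpoints of a generic curve, and the entire point of the argument is to exploit the absolute-continuity-along-curves built into the upper gradient inequality to transport convergence from interior good points out to the endpoints.
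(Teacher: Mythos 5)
Your proof is correct and follows the same route the paper intends: the paper gives no argument of its own but simply cites \cite[Proposition~2.3]{2Bjornbook} and asserts ``the proof is exactly the same,'' and what you have written is a faithful reconstruction of that standard Fuglede-lemma argument. In particular you correctly isolate and resolve the one genuinely delicate point created by assuming only a.e.\ convergence --- controlling $u_n$ at curve endpoints by sliding in from interior parameters where $\gamma$ avoids the null set, which is exactly why the conclusion is stated for a modification $\tilde u$ in the a.e.\ case but for $u$ itself in the q.e.\ case.
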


We shall say that $g$ is a {\em minimal $p$-weak upper gradient} for $u$ if $g_1 \geq g$ a.e. for any $p$-weak upper gradient $g_1$ of $u$. The following theorem is \cite[Theorem 2.5]{2Bjornbook}. 

\begin{Thm}\label{minimumpweak}
Let $1 < p \in \mathbb{R}$. If $u \in \mathcal{L}^p_{loc}(X)$ and $u$ has a $p$-weak upper gradient in $L^p(X)$, then there exists a minimal $p$-weak upper gradient $g$ of $u$. Moreover, $g$ is unique up to sets of measure zero.
\end{Thm}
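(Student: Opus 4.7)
The plan is to run the direct method in $L^p(X)$ on the convex set of $p$-weak upper gradients, and then upgrade the $L^p$-minimizer to a pointwise a.e.\ minimizer via a lattice property. Define
$$\mathcal{G} := \{ g \in L^p(X) : g \geq 0 \text{ and } g \text{ is a } p\text{-weak upper gradient of } u \},$$
which is nonempty by hypothesis. Proposition \ref{linear} applied with $a + b = 1$ and $v = u$ shows that $\mathcal{G}$ is convex, and Proposition \ref{completeness} (together with its subsequence clause) shows that $\mathcal{G}$ is closed in $L^p(X)$. Set $I := \inf_{g \in \mathcal{G}} \|g\|_p$ and choose a minimizing sequence $(g_n) \subset \mathcal{G}$. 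Since each midpoint $\tfrac{1}{2}(g_n + g_m)$ lies in $\mathcal{G}$ and hence has $L^p$-norm at least $I$, Clarkson's inequality for $1 < p < \infty$ forces $(g_n)$ to be Cauchy in $L^p(X)$; its limit $g \in \mathcal{G}$ then attains $I$.

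To promote $g$ from an $L^p$-minimizer to a pointwise a.e.\ minimal $p$-weak upper gradient, I would establish the lattice property of $\mathcal{G}$: whenever $g_1, g_2 \in \mathcal{G}$, also $\min(g_1, g_2) \in \mathcal{G}$. Granting this, for any $g' \in \mathcal{G}$ the function $\min(g, g')$ lies in $\mathcal{G}$ with $L^p$-norm at most $\|g\|_p = I$, and strictly less if $\mu(\{g > g'\}) > 0$; minimality forces $g \leq g'$ a.e. Uniqueness (up to null sets) follows immediately, since any two minimal $p$-weak upper gradients satisfy both $g \leq g'$ and $g' \leq g$ a.e.

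The main obstacle is the lattice property itself. I would approach it as follows. By a Fubini-type modulus argument, for $p$-almost every rectifiable curve $\gamma : [a,b] \to X$ both $g_1$ and $g_2$ satisfy the upper gradient inequality along every subcurve of $\gamma$. Fix such a curve and a partition $a = t_0 < \cdots < t_n = b$, and on each piece $\gamma|_{[t_{i-1}, t_i]}$ apply whichever of $g_1, g_2$ yields the smaller line integral. Summing and using the triangle inequality for $u$ yields
$$|u(\gamma(b)) - u(\gamma(a))| \leq \sum_{i=1}^{n} \min \left( \int_{t_{i-1}}^{t_i} g_1(\gamma(s))\, ds,\; \int_{t_{i-1}}^{t_i} g_2(\gamma(s))\, ds \right).$$
Refining the partition, the right-hand side converges to $\int_\gamma \min(g_1, g_2)\, ds$, yielding the upper gradient inequality for $\min(g_1, g_2)$ and completing the proof.
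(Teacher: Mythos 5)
The paper offers no proof of this statement: it is imported verbatim as \cite[Theorem 2.5]{2Bjornbook}. Your argument is, in substance, the standard proof of that cited theorem, and its skeleton is sound: the set $\mathcal{G}$ of nonnegative $p$-weak upper gradients of $u$ lying in $L^p(X)$ is convex (Proposition \ref{linear} with $a,b\ge 0$, $a+b=1$, $v=u$) and closed (Proposition \ref{completeness} applied to the constant sequence $u_n=u$, which converges q.e.; the limit $g$ can be taken nonnegative after modification on a null set, which is harmless by \cite[Corollary 1.44]{2Bjornbook}); uniform convexity of $L^p$ produces a norm-minimizer in $\mathcal{G}$; and the lattice property upgrades the norm-minimizer to a pointwise a.e.\ minimizer, from which uniqueness is immediate.

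Two steps are thinner than the rest. First, ``refining the partition, the right-hand side converges to $\int_\gamma \min(g_1,g_2)\,ds$'' is true but not free: since $\min(x_1+x_2,y_1+y_2)\ge\min(x_1,y_1)+\min(x_2,y_2)$, refinement only decreases the sums, and each sum dominates $\int_\gamma\min(g_1,g_2)\,ds$, so you get the inequality in the wrong direction automatically. To reach the claimed limit you must either choose partitions adapted to an open neighborhood of $\{s:g_1(\gamma(s))\le g_2(\gamma(s))\}$ and use absolute continuity of the line integrals of $g_1$ and $g_2$ along $\gamma$ (finite for $p$-a.e.\ curve since $g_1,g_2\in L^p(X)$), or invoke Lebesgue differentiation along the arc-length parametrization. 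This is precisely the content of the gluing lemma \cite[Lemma 2.4]{2Bjornbook}, which is also the engine of the proof in the cited source. Second, your minimality conclusion $g\le g_1$ a.e.\ is established only for competitors $g_1\in\mathcal{G}$, i.e.\ $p$-weak upper gradients in $L^p(X)$, whereas the definition in this paper quantifies over all $p$-weak upper gradients; your partition argument as written uses integrability of $g_1$ along curves, so a word is needed to handle (or to exclude, as \cite{2Bjornbook} effectively does by restricting the comparison class) competitors that are not locally integrable. Neither point is a fatal gap, but both require an explicit argument before the proof is complete.
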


We mentioned earlier that one of the ingredients needed in constructing the $p$-Royden boundary on a manifold or a graph is the the norm of the gradient of a function be $p$-integrable. One of the problems we encounter is that there is no differentiable structure on a metric measure space, so we cannot compute the gradient of a function. It turns out though that the minimal $p$-weak upper gradient is an adequate replacement for the norm of the gradient of a function. With this in mind we will write $\vert \nabla u \vert$ to indicate the minimal $p$-weak upper gradient of $u$.

There is no guarantee that if $\vert \nabla u \vert =0$, then $u$ is constant. In order to rectify this situation we need to assume that $X$ satisfies a $(1,p)$-Poincar\'e inequality, which we now describe. Let $q \geq 1$. We shall say that $X$ satisfies a $(q, p)$-Poincar\'e inequality if there exists constants $C > 0$ and $\sigma \geq 1$ such that 
\[ \left( \dashint_{B_r(x)} \vert u - u_{B_r(x)} \vert^q d\mu \right)^{1/q} \leq rC \left(\dashint_{B_{\sigma r}(x)} g^p d\mu\right)^{1/p} \]
for all metric balls $B_r(x), u \in \mathcal{L}^p_{loc}(X)$ and $g$ a $p$-weak upper gradient of $u$, where
\[ u_{B_r(x)} = \dashint_{B_r (x)} u d\mu = \frac{1}{\mu(B_r(x))} \int_{B_r(x)} u d\mu. \]

For the rest of this paper we will assume that $X$ satisfies the $(1,p)$-Poincar\'e inequality. By \cite[Proposition 4.2]{2Bjornbook} $X$ is connected. Thus the $(1,p)$-Poincar\'e inequality assumption on $X$ implies that $g=0$ is a $p$-weak upper gradient of $u$ if and only if $u$ is constant a.e..

Define $BD^p(X)$ to be the set of bounded continuous functions on $X$ with minimal $p$-weak upper gradient in $L^p(X)$. An immediate consequence of Proposition \ref{linear} is that $BD^p(X)$ is a vector space with respect to pointwise addition of functions and scalar multiplication.  Furthermore, $BD^p(X)$ is closed under pointwise multiplication. To see this let $u, v \in BD^p(X).$ Then 
\[ \vert u(\gamma(b)) - u(\gamma(a)) \vert \leq \int_{\gamma} \vert \nabla u \vert ds \]
for all nonconstant rectifiable curves $\gamma \colon [a,b] \rightarrow X$ not in $\Gamma_u$, where $\mbox{Mod}_p(\Gamma_u) = 0$. Also, there exists a set $\Gamma_v$ of rectifiable curves such that $\mbox{Mod}_p (\Gamma_v) = 0$ and 
\[ \vert v(\gamma(b)) - v(\gamma(a)) \vert \leq \int_{\gamma} \vert \nabla v \vert ds \]
for all nonconstant rectifiable curves not in $\Gamma_v$. Now \cite[Lemma 1.34(b)]{2Bjornbook} says that $\mbox{Mod}_p (\Gamma_u \cup \Gamma_v) = 0$. Let $\gamma \colon [a,b] \rightarrow X$ be a nonconstant rectifiable curve that does not belong to $\Gamma_u \cup \Gamma_v$. Then
\begin{eqnarray*} 
\vert (uv)(\gamma(b)) - (uv)(\gamma(a))\vert & \leq & \| u \|_{\infty} \vert v(\gamma(b)) - v(\gamma(a)) \vert + \| v \|_{\infty} \vert u(\gamma(b)) - u(\gamma (a)) \vert \\
   & \leq & \int_{\gamma} ( \Vert u \Vert_{\infty} \vert \nabla v \vert + \Vert v \Vert_{\infty} \vert \nabla u \vert ) ds.
\end{eqnarray*}
Thus, $\Vert u \Vert_{\infty} \vert \nabla v \vert + \Vert v \Vert_{\infty} \vert \nabla u \vert$ is a $p$-weak upper gradient for $uv$ and it is also in $L^p(X)$. Hence, $uv \in BD^p(X)$. The algebra $BD^p(X)$ is known as the {\em $p$-Royden algebra of $X$}.

Let $(u_n)$ be a sequence of functions and $u$ a function on $X$. We shall say that $(u_n) \rightarrow u$ in the $CD^p$-topology if
\[ \limsup_K \vert u_n - u \vert \rightarrow 0 \text{ for all compact subsets }K \text{ of } X \]
and
\[ \int_X \vert \nabla(u_n - u) \vert^p d\mu \rightarrow 0. \]
If the sequence $(u_n)$ is uniformly bounded, in addition to the above conditions, then we will say that $(u_n) \rightarrow u$ in the $BD^p$-topology.
\begin{Thm}\label{closed}
Let $1 < p \in \mathbb{R}$ and let $(u_n)$ be a sequence in $BD^p(X)$ that is uniformly bounded on $X$. Suppose $u$ is a real-valued function on $X$ and that $(u_n) \rightarrow u$ in the $BD^p$-topology. Then $u \in BD^p(X)$.
\end{Thm}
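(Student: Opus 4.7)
My plan is to verify the three defining properties of $BD^p(X)$ for the candidate limit $u$: boundedness, continuity, and the existence of a minimal $p$-weak upper gradient lying in $L^p(X)$. The two ingredients of the $BD^p$-topology feed these properties separately, and the main tools available are Proposition \ref{linear} (subadditivity of upper gradients) together with Theorem \ref{minimumpweak} (existence of minimal upper gradients).

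The boundedness and continuity of $u$ I would extract from the locally-uniform-convergence part of the topology together with the uniform bound. Since each singleton is compact, $\limsup_K |u_n - u| \to 0$ on compacta forces pointwise convergence $u_n(x) \to u(x)$; combining this with $M := \sup_n \|u_n\|_\infty < \infty$ gives $\|u\|_\infty \leq M$. For continuity at a point $x_0 \in X$, I would invoke local compactness of $X$ to choose a compact neighborhood $K$ of $x_0$ and use uniform convergence of the continuous functions $u_n$ on $K$ to conclude that $u$ is continuous at $x_0$.

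For the upper gradient I would argue as follows. Since $u$ is bounded and $\mu$ is finite on bounded sets, we have $u \in \mathcal{L}^p_{loc}(X)$. The hypothesis $\int_X |\nabla(u_n-u)|^p\,d\mu \to 0$ implicitly says that each difference $u_n - u$ carries a minimal $p$-weak upper gradient in $L^p(X)$. Writing $u = u_n + (u - u_n)$ and applying Proposition \ref{linear} with $a=b=1$, the function $|\nabla u_n| + |\nabla(u_n - u)|$ is a $p$-weak upper gradient of $u$ for every fixed $n$, and it belongs to $L^p(X)$ because both summands do. Theorem \ref{minimumpweak} then produces a minimal $p$-weak upper gradient $|\nabla u| \in L^p(X)$, which completes the verification that $u \in BD^p(X)$.

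I do not expect a serious obstacle here: the argument is essentially the $p$-weak-upper-gradient analogue of the completeness of a Sobolev-type seminorm under Minkowski's inequality. The one point that deserves care is the implicit interpretation of $|\nabla(u_n - u)|$ — this symbol only makes sense once one knows $u_n - u$ admits some $p$-weak upper gradient in $L^p(X)$, which is precisely what the convergence hypothesis asserts. As a side benefit, applying the minimality bound twice gives $\bigl||\nabla u_n| - |\nabla u|\bigr| \leq |\nabla(u_n - u)|$ almost everywhere, so one obtains for free that $|\nabla u_n| \to |\nabla u|$ in $L^p(X)$, although this strengthening is not needed for the statement of the theorem.
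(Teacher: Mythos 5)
Your argument is correct, and it takes a genuinely different (and more economical) route than the paper. The paper establishes the existence of an $L^p$ upper gradient for $u$ by constructing a Cauchy sequence $(g_n)$ in $L^p(X)$ of $p$-weak upper gradients of the $u_n$ (borrowing techniques from Goldshtein--Troyanov and Shanmugalingam) and then passing to the limit via Proposition \ref{completeness}, using that pointwise convergence everywhere lets one take $\tilde u = u$ in that proposition. You instead fix a single $n$, write $u = u_n + (u - u_n)$, and apply Proposition \ref{linear} once to get the $L^p$ upper gradient $\vert \nabla u_n\vert + \vert\nabla(u_n-u)\vert$ of $u$, after which Theorem \ref{minimumpweak} supplies the minimal one; the boundedness and continuity steps coincide with the paper's. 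Your approach leans on exactly the point you flag: the definition of the $BD^p$-topology already requires $\vert\nabla(u_n-u)\vert$ to be a well-defined function with $L^p$ norm tending to zero, hence $u_n - u$ must admit a $p$-weak upper gradient in $L^p(X)$ for large $n$, and this is all you need. (You should also note that $u$, being a pointwise limit of Borel functions, is Borel, and that properness of $X$ together with $\mu(B)<\infty$ for balls gives $u \in \mathcal{L}^p_{loc}(X)$, so Theorem \ref{minimumpweak} applies.) What the paper's longer route buys is the identification of an explicit $L^p$-limit $g$ of upper gradients of the $u_n$ as an upper gradient of $u$; your minimality estimate $\bigl\vert\,\vert\nabla u_n\vert - \vert\nabla u\vert\,\bigr\vert \leq \vert\nabla(u_n-u)\vert$ a.e.\ recovers the same convergence $\vert\nabla u_n\vert \to \vert\nabla u\vert$ in $L^p(X)$ essentially for free, so nothing is lost.
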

\begin{proof}
Let $(u_n)$ be a sequence in $BD^p(X)$ that is uniformly bounded on $X$ and suppose $(u_n) \rightarrow u$ in the $BD^p$-topology. Since $X$ is locally compact and $(u_n) \rightarrow u$ uniformly on compact sets, $u$ is a continuous function on $X$. Also $u$ is bounded on $X$ due to $(u_n)$ being uniformly bounded on $X$. Using the techniques from the proof of \cite[Theorem 1.5]{GoldTroyanov01}, or from the proofs of \cite[Lemma 3.6, Theorem 3.7]{Shanmug00}, a Cauchy sequence $(g_n)$ in $L^p(X)$ can be constructed where $g_n$ is a $p$-weak upper gradient for $u_n$ for each $n$. Denote the limit of $(g_n)$ in $L^p(X)$ by $g$, where $g \geq 0$ on $X$. Since $(u_n) \rightarrow u$ pointwise, Proposition \ref{completeness} says that $g$ is a $p$-weak upper gradient of $u$ in $L^p(X)$. Therefore, $u \in BD^p(X)$ and the proof of the theorem is complete.
\end{proof}

Before we move on to the definition of a $p$-harmonic function we need to define the Sobolev $p$-capacity of a set in $X$. Let $N^{1,p}(X)$ be the set of functions $u \in \mathcal{L}^p(X)$ that have a $p$-weak upper gradient in $L^p(X)$. A seminorm can be defined on $N^{1,p}(X)$ via 
 \[ \Vert u \Vert_{N^{1,p}(X)} = \left( \int_X \vert u \vert^p d\mu + \int_X \vert \nabla u \vert^p d\mu \right)^{1/p}. \]
The space $N^{1,p}(X)$ is known as a Newtonian space, and was originally studied by Shanmugalingam in \cite{Shanmug00}. Newtonian spaces were developed in order to establish a Sobolev space type theory on metric measure spaces.

Now suppose $E \subset X$. The {\em Sobolev $p$-capacity} of $E$ is the number
\[ C_p(E) = \inf \Vert u \Vert_{N^{1,p}(X)}, \]
where the infimum is taken over all $u \in N^{1,p}(X)$ for which $u \geq 1$ on $E$. We shall say that a property $P$ holds {\em quasieverywhere} (q.e.) if the set of points on which $P$ fails has Sobolev $p$-capacity zero. One nice fact about Sobolev $p$-capacity is that two functions which agree q.e. on $X$ have the same set of $p$-weak upper gradients, \cite[Corollary 1.49]{2Bjornbook}. Compare this to the fact we mentioned earlier that two functions that agree a.e. do not necessarily have the same set of $p$-weak upper gradients.

Define $N^{1,p}_{loc} (X)$ to be the set consisting of all real-valued functions $f$ on $X$ with $f \in \mathcal{L}^p_{loc} (X)$ and $\vert \nabla f \vert \in \mathcal{L}^p_{loc}(X)$. Identify functions on $N^{1,p}_{loc} (X)$ that agree q.e.. For an open set $U$ in $X$ let $C_0(U)$ denote the set of functions that equal zero q.e. on $X\setminus U$. We shall say that a function $h \in N^{1,p}_{loc} (X)$ is a {\em $p$-minimizer} in $X$ if 
\[ \int_{\Omega} \vert \nabla h \vert^p\, d\mu \leq \int_{\Omega} \vert \nabla v \vert^p\, d\mu  \]
holds for every open $\Omega \Subset X$ and every $v \in N^{1,p}_{loc} (X)$ for which $h - v \in C_0(\Omega)$. The function $h$ is said to be {\em $p$-harmonic} on $X$ if it is a continuous $p$-minimizer on $X$. We will write $HBD^p(X)$ to indicate the $p$-harmonic functions contained in $BD^p(X)$.

\section{The $p$-Royden and $p$-harmonic boundaries}\label{defnpharmboundary}

In this section we define the $p$-Royden and $p$-harmonic boundaries of $X$ by constructing an appropriate compactification of $X$. The algebra $BD^p(X)$ is crucial for this construction. We finish the section by characterizing the metric measure spaces whose $p$-harmonic boundary is the empty set. We begin with
\begin{Lem}\label{seperate}
The space $BD^p(X)$ separates points from closed sets in $X$.
\end{Lem}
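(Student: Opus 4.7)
The plan is to exhibit an explicit Lipschitz cutoff function in $BD^p(X)$ that takes the value $1$ at $x$ and vanishes on $C$, thereby giving the required separation. Given $x \in X$ and a closed set $C \subset X$ with $x \notin C$, since $X \setminus C$ is open I would first choose $r > 0$ small enough that $\overline{B_{2r}(x)} \cap C = \emptyset$. Then I would define
\[ f(y) = \max\left\{0,\, 1 - \tfrac{1}{r} d(y,x)\right\}, \qquad y \in X. \]
By construction $f(x) = 1$, $f$ vanishes on $X \setminus B_r(x)$ and in particular on $C$, takes values in $[0,1]$, and is $(1/r)$-Lipschitz; in particular $f$ is continuous and bounded, so the only remaining point is to verify that $f$ has a minimal $p$-weak upper gradient in $L^p(X)$.

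For that I would produce a compactly supported upper gradient directly, namely $g = (1/r)\chi_{\overline{B_r(x)}}$. The verification that $g$ is an upper gradient would go by considering a rectifiable curve $\gamma\colon [a,b]\to X$ and looking at $\gamma^{-1}(\overline{B_r(x)})$: on portions of the curve outside $\overline{B_r(x)}$ the function $f$ is identically zero and contributes nothing, while on portions inside one uses the $(1/r)$-Lipschitz property of $f$ together with the fact that $f$ vanishes on $\partial B_r(x)$ to bound the total variation of $f\circ\gamma$ by $(1/r)$ times the arc length inside $\overline{B_r(x)}$. Summing the contributions across the (possibly many) entries and exits gives the required inequality $|f(\gamma(b))-f(\gamma(a))| \leq \int_\gamma g\,ds$.

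Finally, because $X$ is complete and $\mu$ is doubling, $X$ is proper, so $\overline{B_r(x)}$ is compact; combined with the assumption that balls have finite measure this yields $\mu(\overline{B_r(x)}) < \infty$ and hence $g \in L^p(X)$. By Theorem \ref{minimumpweak} the minimal $p$-weak upper gradient $|\nabla f|$ then exists and satisfies $|\nabla f| \leq g$ a.e., so $|\nabla f| \in L^p(X)$ and $f \in BD^p(X)$.

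The only delicate step is the upper gradient verification when the curve crosses $\partial B_r(x)$ repeatedly; everything else is immediate from the explicit choice of $f$ and the properness of $X$. I expect no essential obstacle beyond this bookkeeping.
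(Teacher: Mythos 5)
Your proof is correct and follows essentially the same route as the paper: both exhibit an explicit truncated distance function supported near $x$ and check that a scaled characteristic function of a ball (which lies in $L^p(X)$ because balls have finite measure) is an upper gradient, hence a $p$-weak upper gradient. The only difference is cosmetic --- the paper uses $u=\min\{d(x,\cdot),\epsilon\}$, an affine rescaling of your tent function $\max\{0,1-d(\cdot,x)/r\}$.
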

\begin{proof}
Let $A$ be a closed set in $X$ and let $x \in X\setminus A$. Pick $\epsilon > 0$ such that $B_{2\epsilon}(x) \cap A = \emptyset$. Define $u \colon X \rightarrow \mathbb{R}$ by
\[ u(y) = \left\{ \begin{array}{rl}
                               d(x,y),  &  y \in B_{\epsilon}(x) \\
                              \epsilon, & y \notin B_{\epsilon} (x) \end{array} \right. . \]
Since $u(x) = 0 \notin \overline{u(A)}$, $u$ seperates $x$ from $A$. The proof of the lemma will be complete once we show $u \in BD^p(X)$, which we now do. Define $g \colon X \rightarrow [0, \infty]$ by
\[ g(y) = \left\{ \begin{array}{cl}
       1    &    y \in B_{2\epsilon} (x)  \\
      0    &     y  \notin B_{2\epsilon}(x)  \end{array} \right. . \]
Clearly $g \in L^p(X)$. Let $\gamma \colon [a,b] \rightarrow X $ be a rectifiable curve and suppose that $\gamma(a)$ and $\gamma(b)$ are elements of $B_{\epsilon}(x)$. It follows from the triangle inequality that $d( \gamma(b), x) - d(\gamma(a),x) \leq d(\gamma(a), \gamma(b))$. Hence
\[ \vert d(\gamma(b), x) - d(\gamma(a), x) \vert = \vert u(\gamma(b)) - u(\gamma(a)) \vert \leq \int_{\gamma} g ds. \]
Similar calculations show that this inequality is also true in the cases $\gamma(b) \in B_{\epsilon}(x), \gamma(a) \notin B_{\epsilon}(x)$ and both $\gamma(b), \gamma(a) \notin B_{\epsilon}(x)$. Therefore, $g$ is an upper gradient, and hence a $p$-weak upper gradient of $u$. Thus, $u \in BD^p(X)$ and the proof of the lemma is now complete. 
\end{proof}

For $u \in BD^p(X)$ let $I_u$ be a closed bounded interval in $\mathbb{R}$ that contains the image of $u$. Let $Y$ denote the product space
\[ Y \colon= \prod_{u \in BD^p(X)} I_u \] 
 with the Tychonoff topology. The space $Y$ can be thought of as the set of real-valued functions with domain $BD^p(X)$. Furthermore, $Y$ is a compact Hausdorff space, and a sequence $(x_n)$ converges to $x$ in $Y$ if $(x_n(f))$ converges to $x(f)$ for all $f \in BD^p(X).$ The evaluation map $e \colon X \rightarrow Y$ is given by
\[ e(x) f = f(x). \]
We saw in Lemma \ref{seperate} that $BD^p(X)$ separates points from closed sets in $X$, so \cite[Theorem 8.16]{Willard70} tells us that $e$ is actually an embedding of $X$ into $Y$. We identify $X$ with $e(X)$. Let $\overline{X} = \overline{e(X)}$, where the closure is taken in $Y$. Thus, $X$ is an open dense subset of the compact set $\overline{X}$. Also, every function in $BD^p(X)$ can be extended to a continuous function on $\overline{X}$. Denote by $C(\overline{X})$ the set of continuous functions on $\overline{X}$ with the uniform norm. By the Stone-Wierstrass theorem, $BD^p(X)$ is dense in $C(\overline{X})$.

Set $R_p(X) = \overline{X} \setminus X$. The compact Hausdorff space $R_p(X)$ is known as the {\em $p$-Royden boundary} of $X$. We will write $BD^p_c(X)$ to indicate the set of functions in $BD^p(X)$ that have compact support. Denote by $\overline{BD^p_c(X)}_{BD^p}$ the closure of $BD^p_c(X)$ with respect to the $BD^p$-topology. It follows from Theorem \ref{closed} that $\overline{BD^p_c(X)}_{BD^p}$ is contained in $BD^p(X)$. The {\em $p$-harmonic boundary} of $X$ is the following subset of $R_p(X)$:
\[ \Delta_p (X) \colon= \{ x \in R_p(X) \mid x(u) = 0 \text{ for all } u \in \overline{BD^p_c(X)}_{BD^p} \}. \]

Sometimes it will be the case $\Delta_p(X) = \emptyset$. The following theorem will be useful in determining when this happens. 

\begin{Thm}\label{equalsoneempty}
Let $F$ be a closed subset of $\overline{X}$ such that $F \cap \Delta_p(X) = \emptyset$. Then there exists a $u \in \overline{BD^p_c (X)}_{BD^p}$ such that $u =1$ on $F$ and $0 \leq u \leq 1$.
\end{Thm}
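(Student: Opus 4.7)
My plan is to build $u$ in three stages: pointwise separation, compactness-driven summation, and Lipschitz truncation. First, for each $y \in F$ I would produce a non-negative function $v_y \in \overline{BD^p_c(X)}_{BD^p}$ with $v_y(y) > 0$. If $y \in X$, the bump $z \mapsto \max(\epsilon - d(y,z), 0)$ for $\epsilon$ small lies in $BD^p_c(X)$ (its support is $\overline{B_\epsilon(y)}$, compact since $X$ is proper, and $\mathbf{1}_{B_\epsilon(y)} \in L^p(X)$ is a $p$-weak upper gradient by the same type of calculation as in Lemma \ref{seperate}) and is positive at $y$. If $y \in R_p(X)$, then $y \notin \Delta_p(X)$ supplies $\tilde u_y \in \overline{BD^p_c(X)}_{BD^p}$ with $\tilde u_y(y) \neq 0$; composing with $t \mapsto |t|$ (or $t \mapsto t^2$) then produces the desired non-negative $v_y$ with $v_y(y) > 0$, provided one grants that such Lipschitz truncations preserve $\overline{BD^p_c(X)}_{BD^p}$.

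Since $\overline{X}$ is compact and $F$ is closed, $F$ itself is compact. Extending each $v_y$ continuously to $\overline{X}$, the open sets $\{v_y > 0\}$ cover $F$; I would extract a finite subcover indexed by $y_1, \ldots, y_n$ and set $w := v_{y_1} + \cdots + v_{y_n}$. By linearity of the $BD^p$-topology, $w$ lies in $\overline{BD^p_c(X)}_{BD^p}$ and is a non-negative function strictly positive on $F$. Compactness of $F$ and continuity of the extension of $w$ to $\overline{X}$ then produce $\epsilon > 0$ with $w \geq \epsilon$ on $F$.

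Now set $\phi(t) := \min(\max(t,0)/\epsilon, 1)$, a $(1/\epsilon)$-Lipschitz function with $\phi(0) = 0$ and range $[0,1]$, and define $u := \phi \circ w$. Then $0 \leq u \leq 1$ on $X$ and $u = 1$ on $F$. To place $u$ in $\overline{BD^p_c(X)}_{BD^p}$, I would approximate $w$ in the $BD^p$-topology by $w_k \in BD^p_c(X)$ and set $u_k := \phi \circ w_k$. Since $\phi(0) = 0$, the support of $u_k$ is contained in $\mbox{supp}\, w_k$, so $u_k$ has compact support; the chain rule for Lipschitz compositions gives $|\nabla u_k| \leq (1/\epsilon)|\nabla w_k| \in L^p(X)$, placing $u_k \in BD^p_c(X)$. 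Uniform boundedness of $(u_k)$ by $1$ and uniform continuity of $\phi$ transport the uniform-on-compacta convergence from $w_k \to w$ to $u_k \to u$.

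The main obstacle is establishing $\int_X |\nabla(u_k - u)|^p \, d\mu \to 0$. Proposition \ref{linear} only yields the uniform-in-$k$ bound $|\nabla(u_k - u)| \leq (1/\epsilon)(|\nabla w_k| + |\nabla w|)$, which is too crude. I would deduce the required convergence from the continuity of Lipschitz composition on Newtonian-type spaces (a standard result in the spirit of \cite{2Bjornbook}): if $\phi$ is $L$-Lipschitz with $\phi(0)=0$ and $w_k \to w$ in the $BD^p$-topology, then the minimal $p$-weak upper gradient of $\phi \circ w_k - \phi \circ w$ tends to zero in $L^p(X)$. Granting this, $u_k \to u$ in the $BD^p$-topology, whence $u \in \overline{BD^p_c(X)}_{BD^p}$ as required. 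The same principle also underwrites the Lipschitz truncation used in the first paragraph to produce the non-negative $v_y$ from $\tilde u_y$.
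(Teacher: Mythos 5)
Your argument is structurally the same as the paper's: separate each point of $F$ from $\Delta_p(X)$ by a nonnegative element of $\overline{BD^p_c(X)}_{BD^p}$, use compactness of $F$ to sum finitely many of these into a $w$ with $w\ge\epsilon$ on $F$, and then truncate. (The paper does not even split off the case $y\in F\cap X$; it uses the definition of $\Delta_p(X)$ for every $x\in F$, replaces $u_x$ by $-u_x$ and then by $\max\{0,u_x\}$, and finally sets $u=\min\{1,c^{-1}g\}$ with $c=\inf_F g$ --- exactly your $\phi\circ w$.) The only substantive divergence is the step you flag as the main obstacle: showing that the Lipschitz truncation stays in $\overline{BD^p_c(X)}_{BD^p}$, i.e.\ that $\int_X|\nabla(u-u_n)|^p\,d\mu\to 0$ where $u_n=\min\{1,c^{-1}g_n\}$. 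The paper disposes of this in one line by asserting that $|\nabla(c^{-1}g-c^{-1}g_n)|$ is a $p$-weak upper gradient of $u-u_n$, so that $|\nabla(u-u_n)|\le|\nabla(c^{-1}(g-g_n))|$ a.e.\ and the energy convergence is inherited directly; you instead appeal to an unproved continuity-of-Lipschitz-superposition principle.

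Your caution here is well placed, and you should not feel that you missed an easy argument. The pointwise bound $|\min\{1,a\}-\min\{1,b\}|\le|a-b|$ controls sup-norms but not upper gradients: a $p$-weak upper gradient of $v-w$ need not be one of $\min\{1,v\}-\min\{1,w\}$ (on an interval take $v(t)=2-t$ and $w(t)=1-t$, so $v-w\equiv 1$ has upper gradient $0$ while $\min\{1,v\}-\min\{1,w\}$ increases from $0$ to $1$). So the paper's one-line justification is itself not airtight, and the correct repair is precisely the kind of continuity-of-truncation statement you invoke (in the spirit of the Marcus--Mizel superposition theorem, or the lattice-operation continuity results for Newtonian spaces in \cite{2Bjornbook}); note, though, that you need it for the $BD^p$-topology, which carries no $L^p$ control on the functions themselves, so quoting it "in the spirit of" the literature still leaves a real lemma to be checked. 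The same caveat applies to your first paragraph (and to the paper's $\max\{0,u_x\}$ step): nonnegativization by $|t|$ or $\max\{0,\cdot\}$ must also be shown to preserve $\overline{BD^p_c(X)}_{BD^p}$, which is the identical issue. In short: same route as the paper, same single delicate point, handled by you as an explicit IOU and by the paper with an assertion that deserves more justification than it gets.
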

\begin{proof}
Let $x \in F$. Since $x \notin \Delta_p(X)$ there exists $u_x \in \overline{BD^p_c (X)}_{BD^p}$ such that $u_x (x) \neq 0$. Replacing $u_x$ by $-u_x$ if need be, we assume that $u_x(x) >0$. Pick a neighborhood $U_x$ of $x$ for which $u_x > 0$ on $U_x$. Using $\max\{0, u_x\}$ instead of $u_x$ if necessary, we also assume that $u_x \geq 0$ on $X$. Since $F$ is compact, there exists $x_1, \dots, x_k$ that satisfy $F \subseteq \cup_{i =1}^k U_{x_i}$. Let
\[ g = \sum_{i =1}^k u_{x_i}. \]
Then $ g \in \overline{BD^p_c (X)}_{BD^p}$. Set $c = \inf\{ g(x) \mid x \in F\}$. 
So $c > 0$. Now let
\[ u = \min\{1, c^{-1}g \}.\]
Since $\vert \nabla(c^{-1}g) \vert \in L^p(X)$ is a $p$-weak upper gradient of $u$, we have that $u \in BD^p(X)$. In fact, $u \in \overline{BD^p_c(X)}_{BD^p}$. Indeed, let $(g_n)$ be a sequence in $BD^p_c(X)$ that converges to $g$ in the $BD^p$-topology. Define $u_n \in BD^p_c(X)$ by 
\[ u_n = \min\{1, c^{-1}g_n\}. \]
Then $\vert \nabla(u - u_n) \vert \leq \vert \nabla(c^{-1}g - c^{-1}g_n) \vert$ since $\vert \nabla(c^{-1}g - c^{-1}g_n ) \vert$ is a $p$-weak upper gradient of $u- u_n$. Thus
\[  \int_X \vert \nabla( u - u_n)\vert^p d\mu \leq \int_X \vert \nabla (g - g_n) \vert^p d\mu. \]
Hence, $\int_X \vert \nabla(u - u_n) \vert^p d\mu \rightarrow 0$ as $n \rightarrow \infty$. Because $(g_n) \rightarrow g$ uniformly on compact sets, it follows that $(u_n) \rightarrow u$ uniformly on compact sets. Therefore, $u \in \overline{BD^p_c(X)}_{BD^p}$ and $0 \leq u \leq 1$ on $X$.
\end{proof}

Let $1_X$ denote the function that equals one for all $x \in X$. Since $\vert \nabla(1_X) \vert = 0, 1_X \in BD^p(X)$. We shall say that $X$ is {\em $p$-parabolic} if $1_X \in \overline{BD^p_c(X)}_{BD^p}$. If $X$ is not $p$-parabolic, then it is said to be {\em $p$-hyperbolic}. A consequence of the above theorem is the following characterization for $\Delta_p(X)$. 
\begin{Cor} \label{parabolicchar}
Let $X$ be a metric measure space. Then $X$ is $p$-parabolic if and only if $\Delta_p(X) = \emptyset.$
\end{Cor}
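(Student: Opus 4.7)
The plan is to deduce both directions from the single observation that the constant function $1_X$ extends continuously to the constant function $1$ on all of $\overline{X}$, combined with a direct application of Theorem \ref{equalsoneempty}.

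For the forward direction, I would argue by contradiction. Suppose $X$ is $p$-parabolic, so $1_X \in \overline{BD^p_c(X)}_{BD^p}$. If there were some $x \in \Delta_p(X)$, then by the definition of $\Delta_p(X)$ one would have $x(1_X) = 0$. On the other hand, $1_X$ as an element of $BD^p(X)$ extends to a continuous function on $\overline{X}$, and since $1_X \equiv 1$ on the dense subset $X \subset \overline{X}$, this extension is identically $1$, giving $x(1_X) = 1$. This contradiction forces $\Delta_p(X) = \emptyset$.

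For the backward direction, suppose $\Delta_p(X) = \emptyset$. Take $F = \overline{X}$, which is a closed subset of $\overline{X}$ and satisfies $F \cap \Delta_p(X) = \emptyset$ trivially. Applying Theorem \ref{equalsoneempty} produces a function $u \in \overline{BD^p_c(X)}_{BD^p}$ with $u = 1$ on all of $\overline{X}$; restricting to $X$ gives $u = 1_X$, so $1_X \in \overline{BD^p_c(X)}_{BD^p}$, which is the definition of $X$ being $p$-parabolic.

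There is essentially no obstacle here: the corollary is designed to be an immediate consequence of Theorem \ref{equalsoneempty} once one notices that the constant $1$ is the continuous extension of $1_X$ and that the whole compactification $\overline{X}$ is available as a choice of closed set $F$. The only subtle point to mention is why the extension of $1_X$ to $\overline{X}$ is the constant $1$, which follows from continuity together with the density of $X$ in $\overline{X}$.
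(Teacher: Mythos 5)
Your proof is correct and follows essentially the same route as the paper: the backward direction applies Theorem \ref{equalsoneempty} with $F=\overline{X}$, and the forward direction rests on the observation that $x(1_X)=1$ for every $x$ in the $p$-Royden boundary, which contradicts $x(1_X)=0$ for $x\in\Delta_p(X)$ when $1_X\in\overline{BD^p_c(X)}_{BD^p}$. The only cosmetic difference is that the paper extracts a sequence $(x_n)\to x$ in $X$ to conclude $x(1_X)=1$, whereas you argue directly from the density of $X$ in $\overline{X}$ and continuity of the extension, which is if anything slightly cleaner.
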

\begin{proof}
If $\Delta_p(X) = \emptyset$, then by the above theorem $1_X \in \overline{BD^p_c(X)}_{BD^p}$ and $X$ is $p$-parabolic.

Conversely, suppose $X$ is $p$-parabolic and assume that $\Delta_p(X) \neq \emptyset$. Let $x \in \Delta_p(X)$. Then there exists a sequence $(x_n)$ in $X$ such that $ (x_n(f)) \rightarrow x(f)$ for each $f \in BD^p(X)$. Since $x_n(1_X) = 1_X(x_n) = 1$ for all $n, x(1_X) = 1$. However, this contradicts our hypothesis that $1_X \in \overline{BD^p_c(X)}_{BD^p}$. Hence, $\Delta_p(X) = \emptyset$.
\end{proof}
\section{Proof of Theorem \ref{mainresult}} \label{proofmainresult}
In this section we prove Theorem \ref{mainresult}. We start by giving a crucial lemma that is a slightly modified version of \cite[Theorem 10.24]{2Bjornbook}. We then use the lemma in the proof of a proposition that is Theorem \ref{mainresult} for the special case $f \in BD^p(X)$. With this result in hand, we use an approximation argument to prove our main result.

Recall that $X$ represents a metric measure space that is locally compact, noncompact, complete, satisfies the $(1,p)$-Poincar\'e inequality and whose measure is doubling.

\begin{Lem} \label{Dirichletrelcompact}
Let $\Omega$ be a relatively compact domain of $X$. Suppose $f \in BD^p(X)$. Then there exists an unique $p$-harmonic function $h$ in $\Omega$, such that  $h = f$ q.e. on $\partial\Omega$ and $\vert \nabla h \vert \in L^p(X)$.
\end{Lem}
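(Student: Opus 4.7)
The plan is to apply Theorem~\ref{Dirichletboundedmms} inside $\Omega$ with boundary data $f|_{\partial\Omega}$ and then extend the resulting $p$-harmonic function $h_0$ to $X$ by $f$ outside $\Omega$; the main task is to verify that the extended function $h$ admits a $p$-weak upper gradient belonging to $L^p(X)$.

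First, I would check the hypothesis $C_p(X\setminus\Omega)>0$ required by Theorem~\ref{Dirichletboundedmms}. Since $X$ is noncompact and $\Omega$ is relatively compact, $X\setminus\overline{\Omega}$ is a nonempty open set containing some ball $B$ with $\mu(B)>0$; any admissible $u\in N^{1,p}(X)$ with $u\geq 1$ on $X\setminus\Omega$ satisfies $\Vert u\Vert_{N^{1,p}(X)}\geq\mu(B)^{1/p}$, so $C_p(X\setminus\Omega)>0$. Because $f$ is continuous on $X$, its restriction $f|_{\partial\Omega}$ is continuous, and Theorem~\ref{Dirichletboundedmms} yields a bounded $p$-harmonic function $h_0$ on $\Omega$ with $\lim_{\Omega\ni y\to x}h_0(y)=f(x)$ at q.e.\ $x\in\partial\Omega$. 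In the underlying variational formulation, $h_0$ is the unique minimizer of $v\mapsto\int_\Omega\vert\nabla v\vert^p\,d\mu$ among Newtonian $v$ with $v-f\in N^{1,p}_0(\Omega)$; hence $h_0=f$ q.e.\ on $X\setminus\Omega$, and by minimality
\[
\int_\Omega\vert\nabla h_0\vert^p\,d\mu\leq\int_\Omega\vert\nabla f\vert^p\,d\mu<\infty.
\]

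Second, I extend $h_0$ by setting $h=h_0$ on $\Omega$ and $h=f$ on $X\setminus\Omega$, which is well-defined q.e.\ since $h_0=f$ q.e.\ on $\partial\Omega$, and I claim that
\[
g:=\vert\nabla h_0\vert\,\chi_\Omega+\vert\nabla f\vert\,\chi_{X\setminus\Omega}
\]
is a $p$-weak upper gradient of $h$. The verification proceeds by a standard gluing argument: the family of rectifiable curves meeting any capacity-zero set has $p$-modulus zero, so for $p$-a.e.\ curve $\gamma\colon[a,b]\to X$, the function $h$ is continuous at every crossing of $\partial\Omega$, and the upper gradient inequality for $h_0$ on the sub-arcs inside $\Omega$ and for $f$ on the sub-arcs outside $\overline{\Omega}$ can be concatenated to yield the inequality for $h$ against $g$ along $\gamma$. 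Since both summands of $g$ lie in $L^p(X)$, so does the minimal $p$-weak upper gradient $\vert\nabla h\vert\leq g$. Uniqueness follows from the standard uniqueness of the Dirichlet minimization problem for the $p$-energy under the prescribed Newtonian boundary trace.

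The main obstacle I expect is the gluing step across $\partial\Omega$. The q.e.\ (and not merely a.e.) equality $h_0=f$ on $\partial\Omega$ delivered by Theorem~\ref{Dirichletboundedmms} is exactly what is needed here, because functions that agree quasieverywhere share the same set of $p$-weak upper gradients, while a.e.\ agreement does not (as the excerpt emphasizes just before Proposition~\ref{completeness}). Without this q.e.\ boundary identification one would have no way to produce an honest $L^p(X)$ upper gradient for the piecewise-defined extension.
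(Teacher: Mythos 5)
Your proposal is correct and follows essentially the same route as the paper: invoke the bounded-domain Dirichlet theorem on $\Omega$, extend by $f$ outside $\Omega$, and use energy minimality ($\int_\Omega\vert\nabla h\vert^p\,d\mu\leq\int_\Omega\vert\nabla f\vert^p\,d\mu$) together with $\vert\nabla f\vert\in L^p(X)$ to conclude $\vert\nabla h\vert\in L^p(X)$. The paper's proof is terser, leaving implicit the two points you spell out (positivity of $C_p(X\setminus\Omega)$ and the q.e.\ gluing across $\partial\Omega$ that produces an honest $L^p$ upper gradient for the extension), so your added detail is a faithful elaboration rather than a different argument.
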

\begin{proof} 
Let $f \in BD^p(X)$. By \cite[Theorem 10.24]{2Bjornbook} there exists a unique bounded $p$-harmonic function $h$ in $\Omega$ such that 
\[ \lim_{\Omega \ni y \rightarrow x} h(y) = f(x) \text{ for  } x \in \partial \Omega\setminus E, \]
where $E \subseteq \partial \Omega$ and has Sobolev $p$-capacity zero, so $h = f$ q.e. on $\partial \Omega$.
Extend $h$ to all of $X$ by setting $h = f$ on $X\setminus \overline{\Omega}$. It now follows from $\int_{\Omega} \vert \nabla h \vert^p d\mu \leq \int_{\Omega} \vert \nabla f \vert^p d\mu$ and $\vert \nabla f \vert \in L^p(X)$ that $\vert \nabla h \vert \in L^p(X)$. 
\end{proof}
Since $X$ is connected (the $(1,p)$-Poincar\'e inequality implies connected), second countable and locally compact, there exists an exhaustion $(\Omega_k)$ of $X$ by relatively compact domains. 
\begin{Prop}\label{harmoniconx}
Let $f \in BD^p(X)$ and let $(\Omega_k)$ be an exhaustion of $X$. Let $h_k$ be the unique $p$-harmonic function on $\Omega_k$ that satisfies $f = h_k$ q.e. on $X\setminus \Omega_k$. Then there exists a subsequence $(h_{i,k})$ of $(h_k)$ that converges locally uniformly to a $p$-harmonic function $h \in HBD^p(X)$.
\end{Prop}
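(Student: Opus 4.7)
My strategy is to establish uniform $L^\infty$ and $L^p$-gradient bounds on $(h_k)$, extract a locally uniformly convergent subsequence via interior regularity of $p$-harmonic functions, identify the limit as $p$-harmonic, and then use a weak compactness argument together with Propositions \ref{linear} and \ref{completeness} to show that the limit's minimal $p$-weak upper gradient lies in $L^p(X)$. The comparison principle for $p$-harmonic functions on each bounded $\Omega_k$ yields $\inf_X f \le h_k \le \sup_X f$, so $\|h_k\|_\infty \le \|f\|_\infty$ uniformly in $k$. Moreover, since $h_k$ is a $p$-minimizer on $\Omega_k$ and $h_k - f \in C_0(\Omega_k)$, we have $\int_{\Omega_k}|\nabla h_k|^p\,d\mu \le \int_{\Omega_k}|\nabla f|^p\,d\mu$; combined with $h_k = f$ off $\Omega_k$, this gives $\|\nabla h_k\|_p \le \|\nabla f\|_p$ uniformly in $k$.

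For compactness, I would invoke the interior H\"older regularity of $p$-harmonic functions available in the doubling $(1,p)$-Poincar\'e setting: once $k$ is large enough that $\overline{\Omega_m} \subset \Omega_k$, the uniform $L^\infty$-bound produces uniform local H\"older estimates for $h_k$ on $\Omega_m$. Arzel\`a--Ascoli on each $\Omega_m$ together with a diagonal extraction across the exhaustion $(\Omega_m)$ yields a subsequence $(h_{i,k})$ converging locally uniformly on $X$ to a bounded continuous function $h$ with $|h| \le \|f\|_\infty$. Since locally uniform limits of $p$-harmonic functions with uniformly bounded $p$-energies are themselves $p$-harmonic (a standard convergence result for $p$-minimizers in this framework), $h$ is $p$-harmonic on $X$.

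The real obstacle is verifying $|\nabla h| \in L^p(X)$, since local control alone is not enough to place $h$ in $BD^p(X)$. Using the uniform $L^p$-bound on $(|\nabla h_{i,k}|)$ and reflexivity of $L^p(X)$, I pass to a further subsequence with $|\nabla h_{i,k}| \rightharpoonup g$ weakly in $L^p$ for some $g \ge 0$. Mazur's lemma then supplies convex combinations $\tilde g_n = \sum_{j \ge n}\alpha_j^n |\nabla h_{i,j}|$ converging strongly to $g$ in $L^p(X)$, while the corresponding convex combinations $\tilde h_n = \sum_{j \ge n}\alpha_j^n h_{i,j}$ still converge to $h$ pointwise (in fact locally uniformly). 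By Proposition \ref{linear} each $\tilde g_n$ is a $p$-weak upper gradient of $\tilde h_n$, so Proposition \ref{completeness} gives that $g$ is a $p$-weak upper gradient of $h$ (no modification of $h$ is needed because $h$ is already continuous). Hence $|\nabla h| \le g$ a.e., so $|\nabla h| \in L^p(X)$ and $h \in HBD^p(X)$. The Mazur-plus-Proposition \ref{completeness} combination is the technical crux of the argument: it is precisely what allows weak gradient control to be upgraded to a genuine $p$-weak upper gradient for the limit.
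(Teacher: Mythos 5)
Your proposal is correct and follows essentially the same route as the paper's proof: uniform sup-norm and $L^p$-gradient bounds from the maximum principle and energy minimization, interior H\"older estimates plus Arzel\`a--Ascoli and a diagonal argument for locally uniform convergence, the standard convergence theorem to identify the limit as $p$-harmonic, and finally weak compactness with Mazur's lemma combined with Propositions \ref{linear} and \ref{completeness} to produce an $L^p$ $p$-weak upper gradient for $h$. No substantive differences to report.
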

\begin{proof}
Let $(\Omega_k)$ be an exhaustion of $X$ and let $f \in BD^p(X)$. Let $(h_k)$ be the sequence on $X$ for which $h_k$ is $p$-harmonic on $\Omega_k$ and also satisfies $h_k =f$ q.e. on $X\setminus \Omega_k$. Combining the strong maximum principle, \cite[Theorem 8.13]{2Bjornbook}, with  $f \in BD^p(X)$ we obtain a constant $M$ that is a uniform bound for the sequence $(h_k)$. Let $j \in \mathbb{N}$. It follows from \cite[Theorem 8.15]{2Bjornbook} that
\[ \vert h_n (x) - h_n(y) \vert \leq C2Md(x,y)^{\alpha}, \]
where $x,y \in \Omega_j$ and $n > j$. Furthermore, the constant $C$ does not depend on $n$, and $0 < \alpha < 1.$ Thus the family $(h_n)_{n > j}$ is equicontinuous on $\Omega_j$. For $j=1$, the Ascoli-Arzela theorem yields a subsequence $(h_{1,k})$ of $(h_k)$ such that $(h_{1,k})$ converges uniformly on $\Omega_1$ to a continuous function $v_1$. Now there exists a subsequence $(h_{2,k})$ of $(h_{1,k})$ such that $(h_{2,k})$ converges uniformly on $\Omega_2$ to a continuous function $v_2$. Continue inductively in this manner for each $j$. The diagonal construction produces a subsequence $(h_{i,k})$ of $(h_k)$ such that $(h_{i,k})$ converges to a continuous function $h$ on $X$. The sequence $(h_{i,k})$ converges locally uniformly on $X$ because $(h_{j,k})$ converges uniformly on each $\Omega_j$ and $(h_{i,k})$ is a subsequence of $(h_{j,k})$ for each $j$ and $i \geq j$. By \cite[Theorem 9.36]{2Bjornbook}, $h$ is $p$-harmonic on $X$. The boundedness of $h$ follows from the fact that $(h_k)$ is uniformly bounded on $X$. 

We now complete the proof of the proposition by showing $\vert \nabla h \vert \in L^p(X)$. Relabel the subsequence $(h_{i,k})$ as $(h_k)$. The sequence $( \vert \nabla h_k \vert )$ is bounded in $L^p(X)$ because $\vert \nabla h_k \vert \leq \vert \nabla f \vert$ for all $k$. So by passing to a subsequence if necessary, $( \vert \nabla h_k \vert )$ converges weakly to $g \in L^p(X)$. By Mazur's lemma there exists a sequence of finite linear convex combinations $\vert \overline{ \nabla h_i }\vert = \sum_{k = i}^{N_i} a_{i,k} \vert \nabla h_i \vert$ such that $\Vert \vert \overline{\nabla{h_i}}\vert - g \Vert_p \rightarrow 0$. Set $\overline{h_i} = \sum_{k = i}^{N_i} a_{i,  k} h_i$. By Proposition \ref{linear} $\vert \overline{\nabla h_i} \vert$ is a $p$-weak upper gradient for $\overline{h_i}$. As $i \rightarrow \infty, \overline{h_i}$ converges pointwise to $h$, so $g$ is a $p$-weak upper gradient for $h$ by Proposition \ref{completeness}. Hence $\vert \nabla h \vert \in L^p(X)$.
\end{proof}

Since $h \in BD^p(X)$ it can be extended to $\overline{X}$. A reasonable question to ask is for what $x \in R_p(X) (\Delta_p(X))$ does $f(x) = h(x)$?

We now define  conditions on $X$ that will allow us to compare $f$ and $h$ on $R_p(X)$. We shall say that $X$ satisfies a $(p,p)$-Sobolev inequality if there exists a constant $C$ such that
\[ \Vert f \Vert_p \leq C \Vert \vert \nabla f \vert \Vert_p \]
holds for all compactly supported functions $f \in N^{1,p}(X)$. We also assume that the volume of all balls of a fixed positive radius $r$ is bounded below by a positive constant. The reason for this assumption is that in next proposition we will need the following inequality, which was derived in \cite[Lemma 6.4]{HoloLangVaha07}, for $f \in L^p(X)$,
\[ \sup_{B_r(x)} \vert f \vert^p \leq C \left( \int_{B_r(x)} \vert f \vert^p du \right)^d, \]
where $C$ is a positive constant and $ 0 < d < 1$.
\begin{Prop} \label{bpboundary} 
Let $f \in BD^p(X)$ and suppose $X$ satisfies a $(p,p)$-Sobolev inequality. Then there exists a function $h \in HBD^p(X)$ that satisfies $h = f$ on $R_p(X)$.
\end{Prop}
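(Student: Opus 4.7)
The plan is to show that the function $h$ produced by Proposition~\ref{harmoniconx}, which lies in $HBD^p(X)$, has $v := f - h$ extending to a continuous function on $\overline{X}$ that vanishes identically on $R_p(X)$. The strategy is to exhibit a sequence of compactly supported continuous approximants $w_j$ converging to $v$ \emph{uniformly on $X$}; each such $w_j$ extends continuously to $\overline{X}$ with value zero on $R_p(X)$, and uniform convergence on the dense set $X$ promotes to uniform convergence on $\overline{X}$ by continuity, forcing $v \equiv 0$ on $R_p(X)$.

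Let $(h_k)$ denote the locally uniformly convergent subsequence from Proposition~\ref{harmoniconx} (relabeled), where $h_k$ is $p$-harmonic on $\Omega_k$ and $h_k = f$ q.e.\ on $X \setminus \Omega_k$. Set $v_k := f - h_k$, so that $v_k$ is continuous with compact support in $\overline{\Omega_k}$, i.e., $v_k \in BD^p_c(X)$. Testing the $p$-minimality of $h_k$ on $\Omega_k$ against the competitor $f$ gives $\|\nabla h_k\|_p \le \|\nabla f\|_p$, which with Proposition~\ref{linear} implies $\|\nabla v_k\|_p \le 2\|\nabla f\|_p$. Since $v_k$ has compact support, the $(p,p)$-Sobolev inequality yields
\[ \|v_k\|_p \le C\|\nabla v_k\|_p \le 2C\|\nabla f\|_p, \]
so $(v_k)$ is bounded in $L^p(X)$.

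As $p>1$, $L^p(X)$ is reflexive and a subsequence of $(v_k)$ converges weakly in $L^p(X)$; pairing this with the pointwise limit $v_k \to v$ (inherited from the locally uniform convergence $h_k \to h$) identifies the weak limit as $v$. Mazur's lemma then produces convex combinations $w_j := \sum_{k=j}^{N_j} a_{j,k} v_k$ that converge to $v$ strongly in $L^p(X)$; each $w_j$ is continuous with compact support in $X$, hence in $BD^p_c(X)$. Applying the inequality $\sup_{B_r(y)}|w|^p \le C\bigl(\int_{B_r(y)}|w|^p\,d\mu\bigr)^d$ of \cite[Lemma~6.4]{HoloLangVaha07} with $w := v - w_j$, and bounding the local $L^p$-integral by $\|v - w_j\|_p^p$ uniformly in $y$, we obtain
\[ \sup_{y \in X} |v(y) - w_j(y)| \le C^{1/p}\,\|v - w_j\|_p^{d} \longrightarrow 0, \]
that is, $w_j \to v$ uniformly on $X$.

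Since $v$ and each $w_j$ extend continuously to $\overline{X}$ and $X$ is dense in $\overline{X}$, the convergence $w_j \to v$ is in fact uniform on $\overline{X}$. Each $w_j$ has compact support in $X$, so its continuous extension vanishes on $R_p(X)$; passing to the uniform limit we conclude $v \equiv 0$ on $R_p(X)$, i.e., $h = f$ on $R_p(X)$. The step I expect to require the most care is matching the weak-$L^p$ limit with the pointwise limit and extracting a Mazur convex-combination sequence in $BD^p_c(X)$ converging to $v$ strongly in $L^p(X)$; once this is secured, the ball sup inequality performs the crucial upgrade from $L^p$ to uniform convergence on $X$, which is precisely why the $(p,p)$-Sobolev inequality and the uniform positive lower bound on $\mu(B_r(x))$ have been imposed.
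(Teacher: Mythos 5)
Your proof is correct and follows essentially the same route as the paper: construct $h$ via Proposition \ref{harmoniconx}, apply the $(p,p)$-Sobolev inequality to the compactly supported functions $f-h_k$ together with weak compactness and Mazur's lemma to conclude $f-h\in L^p(X)$, and then use the sup-over-balls inequality of \cite[Lemma 6.4]{HoloLangVaha07} (with the lower volume bound) to force $f-h$ to vanish on $R_p(X)$. The only harmless deviations are that you reuse the Mazur convex combinations as the compactly supported approximants and upgrade to uniform convergence on all of $X$ (the paper instead approximates $g=f-h$ by generic $C_c$ functions in $L^p$ and argues along sequences tending to the boundary), and that $v_k=f-h_k$ should only be claimed to be a compactly supported element of $N^{1,p}(X)$ rather than of $BD^p_c(X)$, since $h_k$ attains its boundary data only quasieverywhere on $\partial\Omega_k$ --- which is all your argument actually needs, as the final step requires only $\sup_X|v-w_j|\to 0$ and the compact support of $w_j$, not its continuity.
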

\begin{proof}
Let $f \in BD^p(X)$ and let $(\Omega_k)$ be an exhaustion of $X$. By Proposition \ref{harmoniconx} we have a sequence $(h_i)$ such that $h_i$ is $p$-harmonic on $\Omega_i, h_i = f$ on $X\setminus \Omega_i$ and $(h_i)$ converges locally uniformaly to a $p$-harmonic function $h \in HBD^p(X)$. So $f - h \in BD^p(X)$. We will now show that $f - h \in L^p(X).$ For $i > 1, h_i= f$ on $X\setminus \Omega_i$, thus 
\[ \int_{\Omega_i} \vert \nabla h_i \vert^p d\mu \leq \int_{\Omega_i} \vert \nabla h_1 \vert^p d\mu. \]
Consequently,
\[ \vert \nabla(f - h_i) \vert \leq \vert \nabla f \vert + \vert \nabla h_1\vert < M < \infty \]
for all $i$. So $\vert \nabla (f-h_i) \vert \in L^p(X)$ and $f - h_i$ is a function of compact support in $N^{1,p}(X)$. Because $X$ satisfies a $(p,p)$-Sobolev inequality we have for each $i \in \mathbb{N}$,
\[ \Vert f - h_i \Vert_p \leq C \Vert \vert \nabla(f-h_i)\vert \Vert_p \leq C( \Vert \vert \nabla f\vert \Vert_p + \Vert \vert \nabla h_1 \vert \Vert_p ). \]

Thus the sequence $(f - h_i)$ is bounded in $L^p(X)$. Set $u_i = f - h_i$. By passing to a subsequence if necessary the sequence $(u_i)$ converges weakly to $u \in L^p(X)$. By Mazur's lemma, see \cite[Lemma 6.2]{2Bjornbook}, there exists finite convex combinations $v_k = \sum_{i = k}^{N_k} a_{k,i} u_i$ such that $v_k \rightarrow u$ q.e.. So $u = f-h$ q.e.  since $(f - h_i)$ converges pointwise to $f-h$.  Thus $f-h \in L^p(X)$. Denote $f - h$ by $g$. Because $f, h$ and $g$ are all in $BD^p(X)$, they can be extended to continuous functions on $\overline{X}$. Let $x \in R_p(X)$, then $f(x) = (g +h)(x)$. To finish the proof of the proposition we will show $g(x) =0$. Let $(x_k)$ be a sequence in $X$ that satisfies $(x_k) \rightarrow x$ in $\overline{X}$. Since $g \in L^p(X)$ there exists a sequence $(g_n)$ of continuous functions with compact support such that $\Vert g - g_n \Vert_p \rightarrow 0$. Let $\epsilon > 0$, then there exists a natural number $N$ such that if $ n > N, \Vert g - g_n \Vert_p^p < \epsilon/2$. Fix an $n > N$. Due to $g_n$ having compact support, $x_k \notin \mbox{supp}\, g_n$ for all $k > M$, where $M$ is a natural number. Let $r > 0$ satisfy $B_{r}(x_k) \cap \mbox{supp}\, g_n = \emptyset$. Then for $k > M$ we obtain
\[ \int_{B_{r}(x_k)} \vert g (y) \vert^p \, d\mu = \int_{B_{r}(x_k)} \vert (g - g_n)(y) \vert^p \, d\mu \leq \Vert g - g_n\Vert_p^p < \epsilon/2. \]
Consequently, $\vert g(x_k) \vert < \epsilon/2$ for all $k > M$. By continuity of $g$ on $\overline{X}$ we obtain $\vert g(x) \vert < \epsilon$. Thus $g(x)=0$. Therefore, $f(x) = h(x)$ for each $x \in R_p(X)$.
\end{proof} 
\begin{Rem}
We have strong reason to believe $g \in \overline{BD^p_c(X)}_{BD^p}$ (see Section \ref{someremarks}), but we are unable to prove it.
\end{Rem}

We can now prove Theorem \ref{mainresult}. For convenience we restate the result.
\begin{Thm}\label{extendftopharm}
Let $f$ be a continuous real-valued function on $R_p(X)$. Suppose that $X$ satisfies a $(p,p)$-Sobolev inequality. Also assume that the volume of all balls of a fixed radius $r >0$ is bounded below by a positive constant. Then there exists a $p$-harmonic function $h$ on $X$ such that $\lim_{j \rightarrow \infty} h(x_j) = f(x)$ whenever $x \in R_p(X)$ and $(x_j)$ is a sequence in $X$ converging to $X$. 
\end{Thm}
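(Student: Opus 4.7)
My plan is to reduce the problem to the case $f\in BD^p(X)$ already handled by Proposition \ref{bpboundary}, via an approximation argument in $C(\overline{X})$.

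\textbf{Step 1 (approximation of the boundary data).} Since $\overline{X}$ is compact Hausdorff and $R_p(X)$ is a closed subset, Tietze's extension theorem yields $F\in C(\overline{X})$ with $F|_{R_p(X)}=f$. By the Stone--Weierstrass theorem (already noted after identifying $X$ with $e(X)\subset\overline{X}$), $BD^p(X)$ is dense in $C(\overline{X})$, so I can choose $(f_n)\subset BD^p(X)$ with $\|f_n-F\|_{C(\overline{X})}\to 0$; in particular $(f_n)$ is uniformly Cauchy on $X$.

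\textbf{Step 2 (associated $p$-harmonic functions and a Cauchy estimate).} For each $n$, Proposition \ref{bpboundary} furnishes $h_n\in HBD^p(X)$ with $h_n=f_n$ on $R_p(X)$; by the construction through Proposition \ref{harmoniconx}, $h_n$ is obtained as the locally uniform limit (along a subsequence) of $p$-harmonic functions $h_{n,k}$ on the exhausting relatively compact domains $\Omega_k$ satisfying $h_{n,k}=f_n$ q.e.\ on $X\setminus\Omega_k$. Setting $\varepsilon_{n,m}:=\|f_n-f_m\|_\infty$, on $\partial\Omega_k$ one has $h_{n,k}\le h_{m,k}+\varepsilon_{n,m}$ q.e. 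Since translations by constants preserve $p$-harmonicity, $h_{m,k}+\varepsilon_{n,m}$ is $p$-harmonic on $\Omega_k$, and the comparison principle for $p$-harmonic functions on bounded domains (available in \cite{2Bjornbook}) gives $h_{n,k}\le h_{m,k}+\varepsilon_{n,m}$ throughout $\Omega_k$. The symmetric bound holds as well, and both survive passage to a common diagonal subsequence, yielding $\|h_n-h_m\|_\infty\le\|f_n-f_m\|_\infty$ on $X$. Hence $(h_n)$ is uniformly Cauchy on $X$.

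\textbf{Step 3 (passage to the limit and verification of boundary values).} Let $h:=\lim_n h_n$, the uniform limit on $X$. Each $h_n$ is $p$-harmonic, so by \cite[Theorem 9.36]{2Bjornbook} (already invoked in the proof of Proposition \ref{harmoniconx}) $h$ is $p$-harmonic on $X$. Since each $h_n$ extends continuously to $\overline{X}$ (because $h_n\in BD^p(X)$) and $X$ is dense in $\overline{X}$, the uniform Cauchy condition on $X$ lifts to a uniform Cauchy condition in $C(\overline{X})$, producing a limit $H\in C(\overline{X})$ with $H|_X=h$ and, on $R_p(X)$,
\[
H|_{R_p(X)}=\lim_n h_n|_{R_p(X)}=\lim_n f_n|_{R_p(X)}=F|_{R_p(X)}=f.
\]
For any $x\in R_p(X)$ and any sequence $(x_j)\subset X$ with $x_j\to x$ in $\overline{X}$, continuity of $H$ then gives $h(x_j)=H(x_j)\to H(x)=f(x)$, which is the required boundary behavior.

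The step I expect to be most delicate is the uniform comparison in Step 2: I need the comparison principle for $p$-harmonic functions on each $\Omega_k$ to be valid under merely q.e.\ boundary identification, and I need the resulting inequality to persist through the subsequential diagonal extraction used in Proposition \ref{harmoniconx} to produce $h_n$ and $h_m$; everything else is then a clean application of Tietze, Stone--Weierstrass, and stability of $p$-harmonicity under local uniform limits.
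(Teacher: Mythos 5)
Your proposal is correct and follows essentially the same route as the paper: Tietze extension plus Stone--Weierstrass density of $BD^p(X)$ in $C(\overline{X})$, Proposition \ref{bpboundary} for the approximating data $f_n$, the comparison principle on each $\Omega_k$ to get the uniform Cauchy estimate $\sup_X|h_n-h_m|\le\|f_n-f_m\|_\infty$, and stability of $p$-harmonicity under locally uniform limits. The only (cosmetic) difference is that you package the final step as convergence in $C(\overline{X})$ to a limit $H$ with $H|_{R_p(X)}=f$, whereas the paper runs the equivalent $\epsilon/3$-argument directly on sequences $x_j\to x$.
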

\begin{proof}
Let $f$ be a continuous function on $R_p(X)$. By Tietze's extension theorem there exists a continuous extension of $f$, which we shall also denote by $f$, to all of $\overline{X}$. Recall that we saw in Section \ref{defnpharmboundary} that  $BD^p(X)$ is dense in $C(\overline{X})$ with respect to the uniform norm, which means that $(f_n) \rightarrow f$ in $C(\overline{X})$ only if $\Vert f_n - f \Vert_{\infty} \rightarrow 0$. Let $(f_n)$ be a sequence in $BD^p(X)$ such that $(f_n) \rightarrow f$ in $C(\overline{X}).$ So for $\epsilon > 0$ there exists $N$ for which $\sup_X \vert f_n - f_m \vert < \epsilon$ for $n,m >N$. Let $(\Omega_i)$ be an exhaustion of $X$ by relatively compact sets. For each $n$, Proposition \ref{bpboundary} shows that there exists an $h_n\in HBD^p(X)$ for which $h_n = f_n$ on $R_p(X)$. Furthermore, we also saw in the proof of Proposition \ref{bpboundary} that for each $n$, there exists a sequence $(h_{i,n})$ such that $(h_{i,n}) \rightarrow h_n$ locally uniformly, where $h_{i,n}$ is $p$-harmonic on $\Omega_i$ and $h_{i,n} = f_n$ on $X \setminus \overline{\Omega_i}$. Thus
\[ \sup_{\partial \Omega_i} \vert h_{i,n} - h_{i,m} \vert < \epsilon \]
for all $i \in \mathbb{N}$. Combining $h_{i,m} - \epsilon < h_{i,n} < h_{i,m} + \epsilon $
with the comparison principle, \cite[Theorem 9.39]{2Bjornbook} yields
\[ \sup_{\Omega_i} \vert h_{i,n} - h_{i,m} \vert < \epsilon \]
for each $ i \in \mathbb{N}$. Since $(h_{i,n}) \rightarrow h_n$ and $(h_{i,m}) \rightarrow h_m$ it follows that 
\[ \sup_{\Omega} \vert h_n - h_m \vert < 3\epsilon \]
for any relatively compact subset $\Omega$ of $X$. Consequently 
\[ \sup_X \vert h_n - h_m \vert \leq 3\epsilon. \]
Thus the Cauchy sequence $(h_n)$ converges locally uniformly to a continuous function $h$ on $X$. Also, \cite[Theorem 9.36]{2Bjornbook} tells us that $h$ is $p$-harmonic on $X$.

Let $x \in R_p(X)$ and let $(x_j)$ be a sequence in $X$ for which $(x_j) \rightarrow x$. Choose $\epsilon > 0$. Then there exists an $N$ such that if $n > N$,
\[ \sup_X \vert f_n - f \vert < \epsilon/3 \mbox{ and } \sup_X \vert h_n - h \vert < \epsilon/3. \]
We also have for sufficiently large $j$
\[ \vert h_n(x_j) - f_n(x) \vert < \epsilon/3, \]
because $f_n = h_n$ on $R_p(X)$ and both $f_n$ and $h_n$ are continuous on $X$. It now follows that
\[ \vert h(x_j) - f(x) \vert < \epsilon. \]
Thus $\lim_{j \rightarrow \infty} h(x_j) = f(x)$ and the theorem is proved.
\end{proof}

\section{Some Remarks}\label{someremarks}
In this section we speculate about what would happen if the $(p,p)$-Sobolev condition is dropped from the hypotheses of Proposition \ref{bpboundary}. Recall that in this paper $X$ is assumed to be a locally compact, noncompact, complete metric measure space that satisfies a $(1,p)$-Poincar\'e inequality and whose measure is doubling. In the setting of Proposition \ref{bpboundary} the $p$-harmonic boundary of $X$ coincides with the $p$-Royden boundary of $X$ due to the assumption $X$ satisfies a $(p,p)$-Sobolev inequality, see Proposition \ref{vanish} below. It would be interesting to see what would be true if the $(p,p)$-Sobolev inequality hypothesis is removed from Proposition \ref{bpboundary}. We believe the following to be true:
\begin{Conj} \label{removesobolevcond}
Let $1 < p \in \mathbb{R}$ and let $X$ be a metic measure space. If $f$ is continuous function on $\Delta_p(X)$, then there exists a $p$-harmonic function $h$ on $X$ such that $\lim_{n \rightarrow \infty} h(x_n) = f(x)$, where $x \in \Delta_p(X)$ and $(x_n)$ is a sequence in $X$ that converges to $x$. Moreover, if $f \in BD^p(X)$, then $h \in HBD^p(X)$ and $h = f$ on $\Delta_p(X)$. 
\end{Conj}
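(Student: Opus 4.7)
The plan is to rerun the proof of Theorem~\ref{mainresult}, weakening the target from ``$h=f$ on all of $R_p(X)$'' to ``$h=f$ on $\Delta_p(X)$'', which is all the conjecture requires. First extend $f$ from the compact set $\Delta_p(X)$ to a continuous function on $\overline{X}$ by Tietze, and approximate it uniformly by a sequence $(f_n) \subset BD^p(X)$ via Stone--Weierstrass. For each $f_n$ invoke Proposition~\ref{harmoniconx} to obtain $h_n \in HBD^p(X)$ together with an exhaustion sequence $(h_{i,n})_{i}$, $p$-harmonic on $\Omega_i$ and equal to $f_n$ on $X\setminus\Omega_i$, converging locally uniformly to $h_n$. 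Once we know $h_n = f_n$ on $\Delta_p(X)$, the final passage to the limit in the proof of Theorem~\ref{extendftopharm}---which relied only on the comparison principle on the relatively compact $\Omega_i$ and never on the Sobolev inequality---gives a $p$-harmonic $h$ on $X$ with the required boundary behaviour. The ``moreover'' clause follows by taking $f_n \equiv f$.

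All the weight of the argument therefore falls on establishing
\[
g_n := f_n - h_n \in \overline{BD^p_c(X)}_{BD^p},
\]
since by the very definition of $\Delta_p(X)$ any element of that closure must vanish there. The natural approximants are $g_{i,n} := f_n - h_{i,n}$: each is continuous, uniformly bounded, and supported in $\overline{\Omega_i}$ (because $h_{i,n}=f_n$ off $\Omega_i$), hence lies in $BD^p_c(X)$; moreover $g_{i,n} \to g_n$ locally uniformly by Proposition~\ref{harmoniconx}. What remains is the gradient statement
\[
\int_X \bigl|\nabla(h_n - h_{i,n})\bigr|^p\, d\mu \longrightarrow 0 \qquad (i \to \infty).
\]

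The easy half is a monotonicity observation: a cut-and-paste using the minimising property of $h_{i,n}$ (replace $h_{j,n}$ on $\Omega_j$ by the admissible competitor $h_{i,n}$ when $i<j$) shows that $i\mapsto \int_X|\nabla h_{i,n}|^p\,d\mu$ is non-increasing and bounded below, hence convergent; weak-limit extraction plus Mazur's lemma, as in Proposition~\ref{harmoniconx}, then gives $\bigl\||\nabla h_n|\bigr\|_p \le \lim_i \bigl\||\nabla h_{i,n}|\bigr\|_p$. The genuinely hard part---and the reason the authors leave this as a conjecture---is to upgrade this into strong $L^p$ convergence of the upper gradient of the \emph{difference}. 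In the manifold or graph setting one can subtract gradient vectors and invoke Clarkson's inequalities; on a metric measure space one has only the sub-additive scalar bound $|\nabla(u-v)|\le|\nabla u|+|\nabla v|$, so strong convergence of the individual minimal upper gradients does not by itself close the estimate. A plausible route is a Clarkson-type convexity inequality applied to the $p$-energy functional at the pair $(h_{i,n},h_n)$, exploiting that $h_n$ is a minimiser on every $\Omega\Subset X$ and that the two functions agree outside $\Omega_i$; making this rigorous without some substitute for the Sobolev embedding---perhaps a decay estimate of the form $\int_{\Omega_j\setminus\Omega_i}|\nabla(f_n-h_n)|^p\,d\mu\to0$ obtained from a capacity argument on the Royden compactification---is the main obstacle I do not see how to overcome.
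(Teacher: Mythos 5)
Be aware, first, that the statement you were asked to prove is labelled a \emph{Conjecture} in the paper: the authors explicitly do not prove it, and in Section~\ref{someremarks} they reduce it to exactly the missing ingredient you isolate, namely the $p$-Royden decomposition $f = g + h$ with $g \in \overline{BD^p_c(X)}_{BD^p}$ and $h \in HBD^p(X)$ (see also the Remark after Proposition~\ref{bpboundary}, where they admit they are ``unable to prove'' that $g$ lies in that closure). Your reduction is sound and coincides with the authors' own: the Tietze/Stone--Weierstrass approximation, the comparison-principle Cauchy argument from Theorem~\ref{extendftopharm} (which indeed never invokes the Sobolev inequality), and the observation that everything hinges on showing $f_n - h_n \in \overline{BD^p_c(X)}_{BD^p}$ are all correct. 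Your monotonicity claim for $i \mapsto \int_X |\nabla h_{i,n}|^p\,d\mu$ also checks out, since for $i<j$ the function $h_{i,n}$ agrees with $f_n$ off $\Omega_i \subseteq \Omega_j$ and is therefore an admissible competitor on $\Omega_j$.

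But the proposal is not a proof: the step you flag as ``the genuinely hard part''---strong $L^p$ convergence of $|\nabla(h_n - h_{i,n})|$ to $0$---is precisely the open problem, and nothing in your sketch closes it. Lower semicontinuity plus Mazur's lemma gives $\| |\nabla h_n| \|_p \le \lim_i \| |\nabla h_{i,n}| \|_p$, but on a metric measure space the minimal $p$-weak upper gradient of a difference is not controlled from below by a difference of gradients, so convergence of the energies does not yield $\int_X|\nabla(h_n - h_{i,n})|^p\,d\mu \to 0$; the Clarkson-type route you mention presupposes a vector-valued differential (as on manifolds or graphs) that is not available here, which is exactly why the authors could only carry out the argument under the $(p,p)$-Sobolev hypothesis, where $g$ can instead be shown to lie in $L^p(X)$ and its vanishing on the boundary follows from the ball-volume estimate. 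There is also a secondary gap worth noting: $g_{i,n} = f_n - h_{i,n}$ is only known to vanish quasieverywhere on $X \setminus \Omega_i$ and to be continuous inside $\Omega_i$, so its membership in $BD^p_c(X)$ as the paper defines it (continuous on all of $X$) requires an additional boundary-regularity argument at $\partial\Omega_i$. In short, you have correctly mapped the terrain and located the obstruction exactly where the authors did, but the statement remains unproved.
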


A positive answer to this conjecture would extend known results for complete Riemannian manifolds of bounded geometry \cite[Theorem 1]{Lee05} and connected graphs of bounded degree \cite[Theorem 2.6]{Puls10}.

Conjecture \ref{removesobolevcond} can be proven if a $p$-Royden decomposition can be obtained for functions in $BD^p(X)$. More specifically, given $f \in BD^p(X)$ there exists an unique $g \in \overline{BD^p_c(X)}_{BD^p}$ and a unique $h \in HBD^p(X)$ such that
\begin{equation}
f = g + h. \label{eq:proydendecomp}
\end{equation}

Using the $p$-Royden decomposition we immediately have $f(x) = h(x)$ for each $x \in \Delta_p(X)$ since $g(x) = 0$ for all $x \in \Delta_p(X)$. In fact, in our proof of Proposition \ref{bpboundary} we showed that if $f \in BD^p(X)$, then there exists  $g \in L^p(X)$ and $h \in HBD^p(X)$ such that $f = g+h$. Furthermore, we saw that there was a sequence $(f - h_n)$ in $BD^p_c (X)$ that converged pointwise a.e. to $g$. Actually the following is true
\begin{Prop} \label{simnonamencase}
Suppose $X$ satisfies the hypotheses of Proposition \ref{bpboundary}. Then $\overline{BD^p_c(X)}_{BD^p} \subseteq L^p(X)$. Furthermore, any sequence $(f_n)$ in $BD^p_c(X)$ that converges to $f$ in the $BD^p$-topology also satisfies $\Vert f_n - f \Vert_p \rightarrow 0$ as $n \rightarrow \infty$.
\end{Prop}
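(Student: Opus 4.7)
The plan is to exploit the $(p,p)$-Sobolev inequality as a Poincar\'e-type bound on the linear subspace of compactly supported Newtonian functions. Convergence in the $BD^p$-topology already delivers $L^p$-convergence of the minimal $p$-weak upper gradients to zero, so the crux is to control $\Vert f_n - f_m \Vert_p$ by $\Vert \vert \nabla(f_n - f_m) \vert \Vert_p$ and then pass to the limit.

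First I would fix $f \in \overline{BD^p_c(X)}_{BD^p}$ together with any sequence $(f_n) \subset BD^p_c(X)$ converging to $f$ in the $BD^p$-topology. By Proposition \ref{linear} and the minimality in Theorem \ref{minimumpweak}, $\vert \nabla(f_n - f_m) \vert \leq \vert \nabla(f_n - f) \vert + \vert \nabla(f - f_m) \vert$ a.e., so $\Vert \vert \nabla(f_n - f_m) \vert \Vert_p \to 0$ as $m, n \to \infty$. Each $f_n - f_m$ is continuous, bounded, and supported in the compact set $\mbox{supp}\, f_n \cup \mbox{supp}\, f_m$; since $\mu$ is finite on compact sets, $f_n - f_m \in N^{1,p}(X)$ with compact support. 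Applying the $(p,p)$-Sobolev inequality yields
$$\Vert f_n - f_m \Vert_p \leq C \Vert \vert \nabla(f_n - f_m) \vert \Vert_p \longrightarrow 0,$$
so $(f_n)$ is Cauchy in $L^p(X)$ and converges in $L^p$ to some $\tilde f$.

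Next I would identify $\tilde f$ with $f$. A subsequence of $(f_n)$ converges to $\tilde f$ a.e., while the $BD^p$-topology requires $f_n \to f$ uniformly on compact subsets, and hence pointwise on all of $X$. Therefore $\tilde f = f$ a.e., which forces $f \in L^p(X)$ and $\Vert f_n - f \Vert_p \to 0$. Since $f$ and its approximating sequence $(f_n)$ were arbitrary, both conclusions of the proposition follow.

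I do not foresee any serious obstacle. The one delicate point is verifying that $f_n - f_m$ is an admissible test function for the $(p,p)$-Sobolev inequality, i.e.\ that it lies in $N^{1,p}(X)$ and has compact support. This is immediate from the boundedness of each $f_n$, the finiteness of $\mu$ on compact sets (a consequence of $0 < \mu(B) < \infty$ for nonempty open balls together with properness of $X$), and Proposition \ref{linear}.
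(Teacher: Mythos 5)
Your proposal is correct and follows essentially the same route as the paper: use the $(p,p)$-Sobolev inequality on the compactly supported differences $f_n - f_m$, deduce that $(f_n)$ is Cauchy in $L^p(X)$, and identify the $L^p$-limit with $f$ via a.e.\ convergence of a subsequence against the pointwise convergence supplied by the $BD^p$-topology. Your extra care in checking that $f_n - f_m$ is an admissible test function for the Sobolev inequality is a detail the paper leaves implicit, but it is not a divergence in method.
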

\begin{proof}
Let $f \in \overline{BD^p_c(X)}_{BD^p}$ and let $(f_n)$ be a sequence in $BD^p_c(X)$ that converges to $f$ in the $BD^p$-topology. Then $(f_n) \rightarrow f$ uniformly on compact sets and $\int_X \vert \nabla(f - f_n) \vert^p d\mu \rightarrow 0$ as $n \rightarrow \infty$. By the $(p,p)$-Sobolev condition there exists a constant $C$ such that 
\[ \Vert f_j - f_k \Vert_p \leq C \Vert \vert \nabla(f_j - f_k)\vert \Vert_p, \]
where $f_j$ and $f_k$ belong to the sequence $(f_n)$. It now follows from
\[ \vert \nabla(f_j - f_k) \vert \leq \vert \nabla(f_j - f)\vert + \vert \nabla(f-f_k) \vert \]
and $\int_X \vert \nabla(f - f_n) \vert^p d\mu \rightarrow 0$ that $(f_n)$ is a Cauchy sequence in $L^p(X)$. Let $f'$ be the limit of $(f_n)$ in $L^p(X)$. Then $(f_n)$ also converges to $f'$ pointwise a.e.. Consequently, $f = f'$ a.e. and $f \in L^p(X)$ as desired.
\end{proof}
We are now able to prove
\begin{Prop} \label{vanish}
Suppose $X$ satisfies the hypothesis of Proposition \ref{bpboundary}. If $f \in \overline{BD^p_c(X)}_{BD^p}$, then $f(x) = 0$ for all $x \in R_p(X)$.
\end{Prop}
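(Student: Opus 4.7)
The plan is to mirror, almost verbatim, the final block of the proof of Proposition \ref{bpboundary}, where the same argument was used to show that the $L^p$-piece $g$ in the decomposition $f = g + h$ vanishes on $R_p(X)$. The new ingredient is Proposition \ref{simnonamencase}, which puts us in exactly the same situation.

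First I would unpack the hypothesis. Let $f \in \overline{BD^p_c(X)}_{BD^p}$ and pick a sequence $(f_n)$ in $BD^p_c(X)$ converging to $f$ in the $BD^p$-topology. By Theorem \ref{closed} we already know $f \in BD^p(X)$, so $f$ extends to a continuous function on the compact space $\overline{X}$ and the value $f(x)$ for $x \in R_p(X)$ is simply the value of that extension. By Proposition \ref{simnonamencase}, the $(p,p)$-Sobolev hypothesis forces $f \in L^p(X)$ and $\|f_n - f\|_p \to 0$.

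Next I would run the escape-to-infinity argument. Fix $x \in R_p(X)$ and a sequence $(x_k) \subset X$ with $x_k \to x$ in $\overline{X}$ (this is the same setup as in the proof of Proposition \ref{bpboundary}). Since $x \notin X$ and $X$ is locally compact, $(x_k)$ must eventually leave every compact subset of $X$. Given $\varepsilon > 0$, choose $n$ large enough that $\|f - f_n\|_p^p < \varepsilon$. Because $f_n$ has compact support, for all sufficiently large $k$ the ball $B_r(x_k)$ (with the fixed radius $r$ from the volume-lower-bound hypothesis) is disjoint from $\operatorname{supp} f_n$. Then
\[
\int_{B_r(x_k)} |f|^p\, d\mu \;=\; \int_{B_r(x_k)} |f - f_n|^p\, d\mu \;\leq\; \|f - f_n\|_p^p \;<\; \varepsilon .
\]
Applying the Holopainen–Lang–Vähäkangas estimate $\sup_{B_r(x_k)} |f|^p \leq C\bigl(\int_{B_r(x_k)} |f|^p d\mu\bigr)^d$ (available because of the volume lower bound and the fact that $f \in L^p(X)$) produces $|f(x_k)|^p \leq C\varepsilon^{\,d}$. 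Letting $\varepsilon \to 0$ along a suitable choice of $n$ and $k$ yields $f(x_k) \to 0$, so by continuity of the extension $f(x) = \lim_k f(x_k) = 0$.

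The proof is essentially a transcription; no single step is a serious obstacle, because all the real work was already done to prove Propositions \ref{bpboundary} and \ref{simnonamencase}. The only mild subtlety is the sequential phrasing: one tacitly uses that every point of $R_p(X)$ is the limit of a sequence in $X$, exactly as in the proof of Proposition \ref{bpboundary}. If one wished to avoid this, the same conclusion can be obtained by contradiction — if $|f(x)| > \delta > 0$, continuity on $\overline{X}$ gives a neighborhood $U$ on which $|f| > \delta/2$, and the argument above manufactures infinitely many pairwise disjoint balls $B_r(z_k) \subset X$ with $\int_{B_r(z_k)}|f|^p\,d\mu$ bounded below by a positive constant, contradicting $f \in L^p(X)$.
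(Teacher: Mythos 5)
Your proof is correct and follows essentially the same route as the paper: the paper's own proof likewise invokes Proposition \ref{simnonamencase} to get $\Vert f - f_n \Vert_p \to 0$ and then explicitly defers to ``the argument from the last paragraph of the proof of Proposition \ref{bpboundary},'' which is exactly the escape-to-infinity estimate you transcribe. Your version is, if anything, slightly more complete, since you spell out the use of the $\sup_{B_r(x_k)}|f|^p \leq C\bigl(\int_{B_r(x_k)}|f|^p\,d\mu\bigr)^d$ bound that the paper leaves implicit.
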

\begin{proof}
Let $x \in R_p(X)$ and let $f \in \overline{BD^p_c(X)}_{BD^p}$. Suppose $(x_k)$ is a sequence in $X$ for which $(x_k) \rightarrow x$ in $\overline{X}$; and let $(f_n)$ be a sequence in $BD^p_c(X)$ such that $(f_n) \rightarrow f$ in the $BD^p$-topology. Let $\epsilon > 0$, now by Proposition \ref{simnonamencase} there exists a natural number $N$ that satisfies $\Vert f - f_n \Vert_p^p < \epsilon/2$ for all $n>N$. Using the argument from the last paragraph of the proof Proposition \ref{bpboundary} we obtain that $f(x) = 0$. 
\end{proof}
A direct consequence of the above proposition is
\begin{Cor} \label{agree}
If $X$ satisfies the $(p,p)$-Sobolev inequality, then $R_p(X) = \Delta_p(X).$
\end{Cor}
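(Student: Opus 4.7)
The plan is to observe that this corollary is essentially immediate from Proposition \ref{vanish} combined with the definition of $\Delta_p(X)$. One inclusion is automatic: by the very construction of the $p$-harmonic boundary as a subset of the $p$-Royden boundary, we always have $\Delta_p(X) \subseteq R_p(X)$, with no hypothesis on $X$ needed.

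For the nontrivial reverse inclusion $R_p(X) \subseteq \Delta_p(X)$, I would fix an arbitrary $x \in R_p(X)$ and any $u \in \overline{BD^p_c(X)}_{BD^p}$ and aim to verify that the evaluation $x(u) = 0$, which is precisely the membership condition for $\Delta_p(X)$. Recall that each element of $BD^p(X)$ extends continuously to $\overline{X}$, so writing $x(u) = u(x)$ where $u$ denotes that extension is legitimate. Now apply Proposition \ref{vanish}, which under the $(p,p)$-Sobolev hypothesis asserts exactly that every $u \in \overline{BD^p_c(X)}_{BD^p}$ vanishes at every point of $R_p(X)$. This yields $x(u) = 0$, and since $u$ was arbitrary in $\overline{BD^p_c(X)}_{BD^p}$, we conclude $x \in \Delta_p(X)$.

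Combining both inclusions gives $R_p(X) = \Delta_p(X)$, completing the argument. There is no real obstacle here—all the substantive analytic content (using the $(p,p)$-Sobolev inequality to pass from $BD^p$-convergence to $L^p$-convergence, and then arguing as in the last paragraph of the proof of Proposition \ref{bpboundary} to force the continuous extension to vanish at boundary points) has already been carried out in Propositions \ref{simnonamencase} and \ref{vanish}. The corollary itself is a one-line unpacking of the definition of $\Delta_p(X)$ applied to the conclusion of Proposition \ref{vanish}.
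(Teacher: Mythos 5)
Your proof is correct and matches the paper's intent exactly: the paper presents this corollary as a direct consequence of Proposition \ref{vanish}, and your argument is precisely the one-line unpacking of the definition of $\Delta_p(X)$ that the authors had in mind. Nothing is missing.
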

Now if $x \in R_p(X) \setminus \Delta_p(X)$ then there would exists a $g \in \overline{BD^p_c(X)}_{BD^p}$ for which $g(x) \neq 0$. Let $h \in HBD^p(X)$ be such that $h(x) \neq 0$. Then $f = g+h \in BD^p(X)$ and assuming that the $p$-Royden decomposition (\ref{eq:proydendecomp}) is unique, we see that $f(x) \neq h(x)$ and the conclusion of Proposition \ref{bpboundary} would be false. Thus if the $(p,p)$-Sobolev inequality condition is dropped from the hypotheses of Proposition \ref{bpboundary}, in all likelihood $R_p(X) \neq \Delta_p(X)$ and $R_p(X)$ would need to be replaced with $\Delta_p(X)$ in the statement of the proposition for it to be true. 

Assuming the uniqueness of the $p$-Royden decomposition (\ref{eq:proydendecomp}) for each $f \in BD^p(X)$, the following Liouville type result can be obtained. The set $HBD^p(X)$ contains nonconstant functions precisely when $\vert \Delta_p(X) \vert > 1$. To see this, suppose $x_1$ and $x_2$ are distinct elements in $\Delta_p(X)$. Choose $f \in BD^p(X)$ such that $f(x_1) \neq f(x_2)$. Then the $h$ in the $p$-Royden decomposition of $f$ yields a nonconstant function in $HBD^p(X)$. If $\vert \Delta_p(X) \vert = 1$, then the uniqueness of the $p$-Royden decomposition for each $f$ in $BD^p(X)$ ensures that $HBD^p(X)$ consists entirely of constant functions. 
\bibliographystyle{plain}
\bibliography{proyden_dirichletmms}
\end{document}